\newtheorem{theorem}{Theorem}[section]
\newtheorem*{introthm}{Theorem}
\newtheorem{lemma}[theorem]{Lemma}
\newtheorem{proposition}[theorem]{Proposition}
\newtheorem{corollary}[theorem]{Corollary}
\theoremstyle{definition}
\newtheorem{definition}[theorem]{Definition}
\newtheorem{fact}[theorem]{Fact}
\newtheorem{remark}[theorem]{Remark}
\newtheorem{ipg}[theorem]{}
\DeclareMathOperator{\Ext}{Ext}
\DeclareMathOperator{\Hom}{Hom}
\DeclareMathOperator{\Ch}{\mathbf{C}}
\DeclareMathOperator{\Proj}{Proj}
\DeclareMathOperator{\Inj}{Inj}
\DeclareMathOperator{\ModR}{Mod-\mathnormal{R}}
\DeclareMathOperator{\Rmod}{\mathnormal{R}-mod}
\DeclareMathOperator{\Flat}{Flat}
\DeclareMathOperator{\PProj}{PProj}
\DeclareMathOperator{\FPI}{FPI}
\DeclareMathOperator{\Filt}{Filt}
\newcommand{\class}[1]{\mathcal{#1}}   
\newcommand{\fp}{\textnormal{fp}}
\begin{document}
\emergencystretch 3em

\title{A note on homotopy categories of FP-Injectives}

\author{Georgios Dalezios}
\address{Departamento de Matem\'aticas, Universidad de Murcia, 30100 Murcia, Spain}
\email{georgios.dalezios@um.es}
\address{Department of Mathematical Sciences, University of Copenhagen, Universitets\-parken 5, 2100 Copenhagen {\O}, Denmark}

\thanks{The author is supported by the Fundaci\'{o}n S\'{e}neca of Murcia 19880/GERM/15.}
\date{}

\subjclass[2010]{18E30 (Primary) 16E35, 18G25 (Secondary)}

\begin{abstract}
For a locally finitely presented Grothendieck category $\class A$, we consider a certain subcategory of the homotopy category of FP-injectives in $\class A$ which we show is compactly generated. In the case where $\class A$ is locally coherent, we identify this subcategory with the derived category of FP-injectives in $\class A$. 
Our results are, in a sense, dual to the ones obtained by Neeman \cite{ANm08} on the homotopy category of flat modules. Our proof is based on extending a characterization of the pure acyclic complexes which is due to Emmanouil \cite{Emm-pure-acyclic}. 
\end{abstract}

\maketitle

\section{Introduction}
The work of Neeman \cite{ANm08} on the homotopy category of flat modules has led to interesting advances in ring theory and homological algebra. Neeman was inspired by work of Iyengar and Krause \cite{SInHKr06}, who proved that over a Noetherian ring $R$ with a dualizing complex $D$, the composite $\mathbf{K}(\Proj R)\rightarrowtail\mathbf{K}(\Flat R)\xrightarrow{D\otimes_{R}-}\mathbf{K}(\Inj R)$ is an equivalence, which restricts to Grothendieck duality. Neeman in \cite{ANm08} focuses on the embedding $\mathbf{K}(\Proj R)\rightarrowtail\mathbf{K}(\Flat R)$ for a general ring $R$ and shows that the category $\mathbf{K}(\Proj R)$ is $\aleph_{1}$--compactly generated
and that the composite of canonical maps $\mathbf{K}(\Proj R)\rightarrowtail{}\mathbf{K}(\Flat R)\xrightarrow{}\mathbf{D}(\Flat R)$ is an equivalence.
Related work on homotopy categories and the existence of adjoints between them is done by Krause \cite{Krause-approx-adj}, Murfet and Salarian \cite{DMfSSl11}, Saor\'in and \v{S}{\fontencoding{T1}\selectfont \v{t}}ov\'i\v{c}ek \cite{Saorin-Stoviceck-exact}, and others.

Closely related to the notion of flatness, is the notion of purity~\cite{Cohn-purity}. A submodule $A\leq B$ is called pure if any finite system of linear equations with constants from $A$ and a solution in $B$, has a solution in $A$. This condition can be expressed diagrammatically, and is equivalent to asking for the sequence $A\rightarrowtail B\twoheadrightarrow B/A$ to remain exact after applying, for any finitely presented module $F$, the functor $\Hom_{R}(F,-)$, or equivalently the functor $F\otimes_{R}-$. Such sequences are called pure exact and they are of interest since they form the smallest class of short exact sequences which is closed under filtered colimits. It follows from this discussion that a module $M$ is flat if and only if any epimorphism with target $M$ is pure. Thus flatness can be defined in any additive category which has an appropriate notion of finitely presented objects, namely locally finitely presented additive categories \cite{locally-fp-Breitsprecher,WCB94}. If $\class A$ is such a category, it is well known that the relation between purity and flatness can be given formally via the equivalence $\class A\cong\Flat(\fp(\class A)^{\mathrm{op}},\mathrm{Ab});\,\, A\mapsto\Hom_{\class A}(-,A)|_{\fp(\class A)}$\footnote{\, $\fp(\class A)$ denotes a set of isomorphism classes of finitely presented objects in $\class A$, see \ref{Locally fp}.}, see~\cite[1.4]{WCB94}. Thus, roughly speaking, the study of purity can be reduced to the study of flat (left exact) functors, and Neeman's results have analogues in the context of purity, see Emmanouil \cite{Emm-pure-acyclic}, Krause \cite{Krause-approx-adj}, Simson \cite{Simson-pure-acyclic} and \v{S}{\fontencoding{T1}\selectfont \v{t}}ov\'i\v{c}ek \cite{Stoviceck-on-purity}.

The dual notion of flatness, in a locally finitely presented Grothendieck category $\class A$, is that of FP-injectivity. Namely, an object $A$ in $\class A$ is called FP-injective if any monomorphism with source $A$ is pure. We denote the class of FP-injective objects by $\FPI(\class A)$. FP-injective modules were studied first by Stenstr\"om in \cite{Stenstrom-FPI}. One reason why they are of importance is because over (non-Noetherian) rings where injectives fail to be closed under coproducts, one can work with FP-injectives which are always closed under coproducts. Moreover, a ring is coherent if and only if the class of FP-injective modules is closed under filtered colimits, in strong analogy with the dual situation, where coherent rings are characterized by the closure of flat modules under products.

In this note our goal is to provide, in a sense, duals of the above mentioned results of Neeman, that is, to obtain analogous results for the homotopy category of FP-injectives. For this we look at the tensor embedding functor of a module category to FP-injective (right exact) functors, that is, the functor
$\ModR\rightarrow\class A:=(\Rmod,\mathrm{Ab}) ; \,\, M\mapsto (M\otimes_{R}-)|_{\Rmod}$, which identifies pure exact sequences in $\ModR$ with short exact sequences of FP-injective (right exact) functors, and induces an equivalence $\ModR\cong\FPI(\class A)$ \cite[\S1]{LGrCUJ81}. It is easy to observe that under this equivalence, the pure projective modules (the projectives with respect to the pure exact sequences) correspond to functors in the class $\FPI(\class A)\cap\leftidx{^{\bot}}\FPI(\class A)$\footnote{\,\, $\leftidx{^{\bot}}\FPI(\class A)$ denotes the left orthogonal class of FP-injectives with respect to the $\Ext^{1}_{\class A}(-,-)$ functor, see~\ref{Cotorsion pairs}.}. We point out that by work of Eklof and Trlifaj \cite{PCEJTr01}, we know that this class consists of those FP-injectives which are (summands of) transfinite extensions of finitely presented objects (see \ref{facts for our favorite cotorsion pair}). We can now state our main result (\ref{main thm}).


\begin{introthm}Let $\class A$ be a locally finitely presented Grothendieck category and denote by $\FPI(\class A)$ the class of FP-injective objects in $\class A$. Then the homotopy category $\mathbf{K}(\FPI(\class A)\cap\leftidx{^{\bot}}\FPI(\class A))$ is compactly generated. Moreover, if $\class A$ is locally coherent, the composite functor
\[\mathbf{K}(\FPI(\class A)\cap\leftidx{^{\bot}}\FPI(\class A))\rightarrowtail\mathbf{K}(\FPI(\class A))\xrightarrow{\mathrm{can}}\mathbf{D}(\FPI(\class A))\]
is an equivalence of triangulated categories.
\end{introthm}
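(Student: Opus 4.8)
The plan is to view $\mathbf{K}(\FPI(\class A)\cap{}^{\bot}\FPI(\class A))$ through the exact category $\FPI(\class A)$ and to run Neeman's argument from \cite{ANm08}, the part where he invokes Emmanouil's theorem on pure acyclic complexes of flats being replaced by an extension of \cite{Emm-pure-acyclic} to complexes with components in $\class X:=\FPI(\class A)\cap{}^{\bot}\FPI(\class A)$. First I would set up the exact category: the cotorsion pair $({}^{\bot}\FPI(\class A),\FPI(\class A))$ is cogenerated by the set $\fp(\class A)$, hence complete, and endowing $\FPI(\class A)$ with the exact structure inherited from $\class A$ its projective objects are exactly those of $\class X$. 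This exact category has enough projectives, because a special $\class X$-precover of $J\in\FPI(\class A)$ obtained from the cotorsion pair is a conflation $0\to J'\to X\to J\to 0$ with $X\in\class X$ and $J'\in\FPI(\class A)$ (the latter since $\FPI(\class A)$ is closed under extensions). Therefore $\mathbf{D}(\FPI(\class A))=\mathbf{K}(\FPI(\class A))/\class Z$, where $\class Z$ is the triangulated subcategory of complexes of FP-injectives that are acyclic in $\class A$ with cycles in $\FPI(\class A)$; since a short exact sequence with FP-injective kernel is pure, these are precisely the \emph{pure acyclic} complexes.

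The technical core is an extension of \cite{Emm-pure-acyclic}, which I would phrase as three statements: (i) a complex $X$ of FP-injectives is pure acyclic if and only if $\Hom_{\class A}(F,X)$ is acyclic for every $F\in\fp(\class A)$; (ii) a pure acyclic complex all of whose components lie in $\class X$ is contractible; (iii) $\Hom_{\mathbf{K}(\FPI(\class A))}(X,P)=0$ whenever $X\in\Ch(\class X)$ and $P$ is pure acyclic. For (i), acyclicity of all $\Hom_{\class A}(F,X)$ forces each differential of $X$ to be a pure epimorphism onto its image, so the cycles are pure subobjects of FP-injectives, hence FP-injective. For (ii) and (iii) I would use that, by~\ref{facts for our favorite cotorsion pair}, the objects of $\class X$ are direct summands of transfinite extensions of finitely presented objects, and then run a $\varprojlim^{1}$-vanishing (Mittag--Leffler) argument: first transport acyclicity of the complexes $\Hom_{\class A}(F,P)$, $F\in\fp(\class A)$, along transfinite extensions to get $\Hom_{\class A}(X^{n},P)$ acyclic for each $n$, and then control the totalization $\Hom^{\bullet}(X,P)$, which is in general unbounded.

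For the first assertion, observe that for $F\in\fp(\class A)$ one has $F\in{}^{\bot}\FPI(\class A)$, so iterating special $\FPI(\class A)$-preenvelopes yields an acyclic complex $0\to F\to J^{0}_{F}\to J^{1}_{F}\to\cdots$ with all $J^{i}_{F}\in\class X$ (again by closure of ${}^{\bot}\FPI(\class A)$ under extensions); set $\mathbf{X}_{F}=(J^{0}_{F}\to J^{1}_{F}\to\cdots)\in\Ch(\class X)$. A set of representatives of such complexes is compact in $\mathbf{K}(\class X)$ because $\Hom_{\class A}(F,-)$ commutes with coproducts for finitely presented $F$, and it generates: if $Y\in\Ch(\class X)$ is right orthogonal to all the $\mathbf{X}_{F}$, then $\Hom_{\class A}(F,Y)$ is acyclic for every $F\in\fp(\class A)$, so $Y$ is pure acyclic by (i) and contractible by (ii). For the second assertion, assume $\class A$ locally coherent; then $\FPI(\class A)$ is closed under filtered colimits, and using that finitely presented objects have finitely presented syzygies one checks that $({}^{\bot}\FPI(\class A),\FPI(\class A))$ is hereditary. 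Transferring this complete hereditary cotorsion pair to chain complexes (cf. \cite{Saorin-Stoviceck-exact}) gives, in the exact category $\Ch(\FPI(\class A))$, a complete cotorsion pair whose left class has components in $\class X$ and whose right class meets $\Ch(\class X)^{\bot}$ exactly in the pure acyclic complexes; hence every complex $J$ of FP-injectives fits in a conflation $0\to P\to X\to J\to 0$ with $X\in\Ch(\class X)$ and $P$ pure acyclic. This yields a semiorthogonal decomposition $\mathbf{K}(\FPI(\class A))=\langle\class Z,\mathbf{K}(\class X)\rangle$, the semiorthogonality being (iii) and $\mathbf{K}(\class X)\cap\class Z=0$ being (ii), so the composite $\mathbf{K}(\class X)\rightarrowtail\mathbf{K}(\FPI(\class A))\xrightarrow{\mathrm{can}}\mathbf{K}(\FPI(\class A))/\class Z=\mathbf{D}(\FPI(\class A))$ is an equivalence.

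The main obstacle is the technical core of the second paragraph, and within it step (iii): propagating acyclicity of $\Hom_{\class A}(F,-)$ from finitely presented $F$ to transfinite extensions, and then through an unbounded Hom-totalization, is exactly where the $\varprojlim^{1}$-vanishing extending Emmanouil's method has to be established. The locally coherent hypothesis enters a second, more routine time, in securing heredity of the cotorsion pair and hence the existence of $\Ch(\class X)$-approximations for unbounded complexes.
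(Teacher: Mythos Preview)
Your overall strategy matches the paper's: establish compact generation of $\mathbf{K}(\class X)$ via right $\class X$-resolutions of finitely presented objects, prove an extension of Emmanouil's theorem to complexes with components in $\leftidx{^{\bot}}\FPI(\class A)$, and in the locally coherent case deduce the equivalence from the resulting orthogonality between $\mathbf{K}(\class X)$ and the pure acyclics. The paper packages compact generation by quoting a criterion of Holm--J{\o}rgensen and obtains the equivalence via Brown representability for the inclusion $\mathbf{K}(\class X)\hookrightarrow\mathbf{K}(\FPI(\class A))$, whereas you unpack the generators by hand and phrase the endgame as a semiorthogonal decomposition coming from a lifted cotorsion pair; these are cosmetic differences.

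The one substantive divergence is in the proof of your step~(iii). The paper does \emph{not} argue via $\varprojlim^{1}$. Instead it invokes a structural result of \v{S}{\fontencoding{T1}\selectfont \v{t}}ov\'i\v{c}ek (a Hill-type lemma): a complex whose components are $\fp(\class A)$-filtered is itself filtered, as a complex, by bounded-below complexes of finitely presented objects. One then applies Eklof's lemma in the exact category $\Ch(\class A)_{\mathrm{pure}}$, using that degreewise pure extensions of complexes with pure projective components are degreewise split, so that $\Ext^{1}_{\Ch(\class A)_{\mathrm{pure}}}$ coincides with $\Ext^{1}_{\mathrm{dw}}$ and hence with $\Hom_{\mathbf{K}(\class A)}$ up to shift. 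This handles the unbounded case in one stroke, without any inverse-limit analysis. Your proposed route also works: once you know $\Hom_{\class A}(X^{n},P)$ is acyclic for each $n$ (which, for $P$ pure acyclic with FP-injective components, follows directly from $X^{n}\in\leftidx{^{\bot}}\FPI(\class A)$ and needs no transfinite induction at that stage), the brutal truncations $X_{\leq n}$ are bounded above, so the usual inductive construction gives $\Hom_{\mathbf{K}}(X_{\leq n},P)=0$, and the inverse system $\Hom(X_{\leq n},P)$ has degreewise surjective transition maps, so the Milnor sequence has vanishing $\varprojlim^{1}$ term. The paper's argument is slicker and yields the stronger statement for arbitrary pure acyclic targets (not just those with FP-injective components), but for the theorem at hand either approach suffices.
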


Our proof is based on Neeman's strategy. Let $\class C:=\FPI(\class A)\cap\leftidx{^{\bot}}\FPI(\class A)$. Since $\mathbf{K}(\class C)$ is compactly generated and admits coproducts, we obtain a right adjoint of the inclusion $\mathbf{K}(\class C)\rightarrowtail\mathbf{K}(\FPI(\class A))$, and in the case where $\class A$ is locally coherent, we identify its kernel with the pure acyclic complexes of FP-injective objects in $\class A$. From this it follows that any chain map from a complex in $\mathbf{K}(\class C)$ to a pure acyclic complex of FP-injectives is null homotopic. In fact, in \ref{Emm +} we prove something more general, namely that for any locally finitely presented Grothendieck category $\class A$, any chain map from a complex in $\mathbf{K}(\leftidx{^{\bot}}\FPI(\class A))$ to a pure acyclic complex is null homotopic. This extends a result of Emmanouil \cite{Emm-pure-acyclic}.

Finally, we point out that \v{S}{\fontencoding{T1}\selectfont \v{t}}ov\'i\v{c}ek \cite{Stoviceck-on-purity} has also studied the category $\mathbf{D}(\FPI(\class A))$ and in the locally coherent case has proved the existence of a model category structure with homotopy category $\mathbf{D}(\FPI(\class A))$. Using our main result we can identify the cofibrant objects in this model structure with the category $\mathbf{K}(\class C)$ (see \ref{Model category}, \ref{Stov}).


\section{Preliminaries} 
\begin{ipg}\textbf{Locally finitely presented additive categories.} \cite{locally-fp-Breitsprecher,WCB94}
\label{Locally fp}
In an additive category $\class A$, an object $X$ is called \textit{finitely presented} if the functor $\Hom_{\class A}(X,-):\class A\rightarrow\mathrm{Ab}$ preserves filtered colimits. $\class A$ is called \textit{locally finitely presented} if it is cocomplete, the isomorphism classes of finitely presented objects in $\class A$ form a set $\mathrm{fp}(\class A)$, and every object in $\class A$ is a filtered colimit of objects in $\mathrm{fp}(\class A)$. An abelian category $\class A$ is locally finitely presented if and only if it is a Grothendieck category with a generating set of finitely presented objects \cite[Satz 1.5]{locally-fp-Breitsprecher}. 
A locally finitely presented Grothendieck category $\class A$ is called \textit{locally coherent} if the subcategory $\fp(\class A)$ is abelian.
\end{ipg}

\begin{ipg}\textbf{Purity.}
\label{Purity}
If $\class A$ is a locally finitely presented additive category, a sequence $0\rightarrow X\rightarrow Y\rightarrow Z\rightarrow 0$ in $\class A$ is called \textit{pure exact} if it is $\Hom_{\class A}(\fp(\class A),-)$--exact, that is, if for any $A\in\fp(\class A)$, the sequence \[0\rightarrow\Hom_{\class A}(A,X)\rightarrow\Hom_{\class A}(A,Y)\rightarrow\Hom_{\class A}(A,Z)\rightarrow 0\] is an exact sequence of abelian groups. An object $X\in\class A$ is called \textit{pure projective} if any pure exact sequence of the form $0\rightarrow Z\rightarrow Y\rightarrow X\rightarrow 0$ splits, and dually $X$ is called \textit{pure injective} if any pure exact sequence of the form $0\rightarrow X\rightarrow Y\rightarrow Z\rightarrow 0$ splits. We will denote the class of pure projective objects in $\class A$ by $\PProj(\class A)$. Pure exact sequences induce on the category $\class A$ the structure of a (Quillen) exact category, i.e. we equip $\class A$ with an exact structure \cite[Dfn.~2.1]{Buhler-exact-cat} where the conflations are the pure exact sequences; see Crawley-Boevey~\cite[3.1]{WCB94}. We will refer to this exact structure as the pure exact structure on $\class A$.
\end{ipg}

\begin{ipg}\textbf{Cotorsion Pairs.} (\cite{Salce-cotorsion},~see~also~\cite{GobelTrlifaj})
\label{Cotorsion pairs}
Let $\class X$ be a class of objects in an exact category $\class A$. Put
\[\class X\leftidx{^{\bot}}:=\{A\in\class A\, |\, \forall X\in\class X, \,\, \Ext^{1}_{\class A}(X,A)=0\}\]
and define $^{\bot} \class X$ analogously. A pair $(\class X,\class Y)$ of classes in $\class A$ is called a \textit{cotorsion pair} if $\class X ^{\bot}=\class Y$ and $^{\bot}\class Y=\class X$. 
A cotorsion pair is said to be \textit{generated by a set}\footnote{This terminology is in accordance with G\"obel and Trlifaj \cite[Dfn.~2.2.1]{GobelTrlifaj}.} if it is of the form $(\leftidx{^{\bot}}(\mathcal{S}\leftidx{^{\bot}}),\mathcal{S}\leftidx{^{\bot}})$ where $\mathcal{S}$ is a set of objects in $\class A$. A cotorsion pair $(\class X,\class Y)$ is called \textit{complete} if for every object $A$ in $\class A$ there exists a short exact sequence 
$0\rightarrow Y\rightarrow X\rightarrow A\rightarrow 0$ with $X\in\class X$ and $Y\in\class Y$, and also a short exact sequence 
$0\rightarrow A\rightarrow Y\rightarrow X\rightarrow 0$ with $X\in\class X$  and $Y\in\class Y$. It is called \textit{hereditary} if $\class X$ is closed under kernels of epimorphisms and $\class Y$ is closed under cokernels of monomorphisms.
\end{ipg}
We recall a fundamental result on cotorsion pairs generated by a set from the work of Eklof and Trlifaj \cite{PCEJTr01}. First a definition.

\begin{definition}
Let $\class A$ be an abelian category and $\mathcal{S}$ a class of objects in $\class A$. An object $A$ in $\class A$ is called \textit{$\class S$-filtered} if there exists a chain of subobjects 
\[0=A_{0}\subseteq A_{1}\subseteq ...\subseteq \bigcup\limits_{\alpha<\sigma}A_{\alpha}=A\]
where $\sigma$ is an ordinal, $A_{\lambda}=\cup_{\beta<\lambda}A_{\beta}$ for all limit ordinals $\lambda$, and $A_{\alpha+1}/A_{\alpha}\in\class S$ for all $\alpha<\sigma$. The class of $\class S$-filtered objects will be denoted by $\Filt(\class S)$.
\end{definition}

\begin{fact}(\cite{PCEJTr01}, see also \cite[3.2]{GobelTrlifaj})
\label{facts for our favorite cotorsion pair}
Let $\class S$ be a (small) set of objects in a Grothendieck category and assume that $\class S$ contains a generator. Then the following hold:
\begin{itemize}
\item[(i)] The cotorsion pair $(\leftidx{^{\bot}}(\class S\leftidx{^{\bot}}),\class S\leftidx{^{\bot}})$ is complete.
\item[(ii)] The class $\leftidx{^{\bot}}(\class S\leftidx{^{\bot}})$ consists of direct summands of $\class S$-filtered objects, that is, for all $X\in \leftidx{^{\bot}} (\class S\leftidx{^{\bot}} )$ we have $P\cong X\oplus K$ where $P\in\Filt(S)$. Moreover, in this decomposition $K$ can be chosen in $\leftidx{^{\bot}}(\class S\leftidx{^{\bot}})\cap \class S\leftidx{^{\bot}}.$
\end{itemize}
\end{fact}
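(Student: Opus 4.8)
This is the Eklof--Trlifaj theorem, so I will only describe the plan. Write $\class{Y}:=\class{S}^{\bot}$ and $\class{X}:={}^{\bot}(\class{S}^{\bot})$, so that $(\class{X},\class{Y})$ is the cotorsion pair generated by $\class{S}$. The starting point is \emph{Eklof's Lemma}: every $\class{S}$--filtered object belongs to $\class{X}$. I would prove this by showing that an arbitrary extension $0\to Y\to Z\to A\to 0$ with $Y\in\class{Y}$ and $A\in\Filt(\class{S})$ splits. Pulling back along a filtration $(A_{\alpha})_{\alpha<\sigma}$ of $A$ yields a filtration $(Z_{\alpha})$ of $Z$ with $Z_{\alpha+1}/Z_{\alpha}\cong A_{\alpha+1}/A_{\alpha}\in\class{S}$, and one builds a compatible family of splittings $\rho_{\alpha}\colon A_{\alpha}\to Z_{\alpha}$ by transfinite induction: at a limit ordinal one takes the union of the $\rho_{\alpha}$ (a direct limit of genuine splittings is again a splitting), and at a successor step one extends $\rho_{\alpha}$ over $A_{\alpha+1}$, the obstruction to doing so lying in $\Ext^{1}_{\class A}(A_{\alpha+1}/A_{\alpha},Y)=0$. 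Since $\class{X}$, as an $\Ext^{1}$--orthogonal class, is closed under direct summands, this already gives one inclusion of part (ii): the closure of $\Filt(\class{S})$ under direct summands is contained in $\class{X}$.

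For completeness (i) I would produce special approximations of an arbitrary object $M$ by the transfinite small object argument. Since $\class A$ is Grothendieck, hence locally presentable, I fix a regular cardinal $\kappa$ for which every object of $\class{S}$ is $\kappa$--presentable. The special $\class{X}$--precover is where the hypothesis that $\class{S}$ contains a generator $G$ is used: choose an epimorphism $G^{(I)}\twoheadrightarrow M$ (so $G^{(I)}$ is already $\class{S}$--filtered), and then iteratively enlarge the source, at stage $\alpha$ forming a pushout that adjoins a coproduct of objects of $\class{S}$ realising \emph{every} class in $\Ext^{1}_{\class A}(S,K_{\alpha})$ for $S\in\class{S}$, where $K_{\alpha}$ denotes the current kernel over $M$, and passing to unions at limits. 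After $\kappa$ steps one obtains $0\to K_{\kappa}\to P\to M\to 0$ with $P\in\Filt(\class{S})\subseteq\class{X}$ (it is obtained from the $\class{S}$--filtered object $G^{(I)}$ by a transfinite sequence of extensions with quotients in $\class{S}$) and with $K_{\kappa}\in\class{S}^{\bot}=\class{Y}$, since any class in $\Ext^{1}_{\class A}(S,K_{\kappa})$ with $S\in\class{S}$ comes from $\Ext^{1}_{\class A}(S,K_{\alpha})$ for some $\alpha<\kappa$ --- by $\kappa$--presentability of $S$ and $\kappa$--filteredness of the colimit --- and was killed at stage $\alpha+1$. The special $\class{Y}$--preenvelope of $M$ is obtained by the same pushout-and-union mechanism applied to $M$ itself (no generator is needed here); alternatively it follows from Salce's Lemma, since $\class A$ has enough injectives and these lie in $\class{S}^{\bot}$. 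This establishes (i).

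The remaining inclusion in (ii) is then immediate: given $X\in\class{X}$, apply the special precover construction to get $0\to Y\to P\to X\to 0$ with $P\in\Filt(\class{S})$ and $Y\in\class{Y}$; since $X\in\class{X}={}^{\bot}\class{Y}$ we have $\Ext^{1}_{\class A}(X,Y)=0$, so the sequence splits and $P\cong X\oplus Y$. Taking $K:=Y$ finishes the proof, since $K$ lies in $\class{Y}$ by construction and, being a direct summand of $P\in\Filt(\class{S})\subseteq\class{X}$, lies in $\class{X}$ as well, so $K\in\class{X}\cap\class{Y}$.

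The main obstacles I anticipate are twofold. The first is executing the transfinite constructions cleanly: choosing $\kappa$ so that the towers stabilise, checking that each pushout step behaves as claimed (the new kernel sits in a short exact sequence with $\class{S}$--quotient, and the total object remains $\class{S}$--filtered), and verifying that $K_{\kappa}$ genuinely lies in $\class{S}^{\bot}$. The second --- and this is where working in a general Grothendieck category rather than a module category bites --- is justifying the $\Ext$--theoretic inputs in the absence of projectives: that $\Ext^{1}_{\class A}$ may be computed as Yoneda Ext, that it sends coproducts in the first variable to products (needed to realise all Ext classes simultaneously), and, the genuinely nontrivial point, that $\Ext^{1}_{\class A}(S,-)$ commutes with $\kappa$--filtered colimits for $\kappa$--presentable $S$. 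This last fact, together with the successor step of Eklof's Lemma, is the technical heart of the argument.
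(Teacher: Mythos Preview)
The paper does not actually prove this statement: it is recorded as a \emph{Fact} with citations to Eklof--Trlifaj \cite{PCEJTr01} and G\"obel--Trlifaj \cite[3.2]{GobelTrlifaj}, and no argument is given. Your proof strategy is a correct outline of the standard Eklof--Trlifaj argument found in those references --- Eklof's Lemma for one inclusion, the transfinite small-object argument for completeness (using that $\class S$ contains a generator to start the precover construction), and then splitting a special precover to obtain the decomposition in (ii) with $K\in\class X\cap\class Y$ --- so there is nothing to compare beyond noting that you have supplied what the paper deliberately outsources.
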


\begin{ipg}\textbf{FP-Injectives.}~\cite{Stenstrom-FPI} Let $\class A$ be a locally finitely presented Grothendieck category. The objects in the class $\fp(\class A)\leftidx{^{\bot}}=\{A\in\class A\, |\, \forall F\in\fp(\class A),\,\, \Ext^{1}_{\class A}(F,A)=0\}$ are called \textit{FP-injective objects}. We will denote this class by $\FPI(\class A)$.
 
Note that fact \ref{facts for our favorite cotorsion pair}, applied on the cotorsion pair $(\leftidx{^{\bot}}\FPI(\class A),\FPI(\class A))$, tells us that the class $\leftidx{^{\bot}}\FPI(\class A)$ consists of direct summands of $\fp(\class A)$-filtered modules. 
 \end{ipg}
 
We recall the following well known facts for the class of FP-injectives.
\begin{fact} (\cite{Stenstrom-FPI},~see~also~\cite[App.B]{Stoviceck-on-purity})
\label{Facts for FPI}
Let $\class A$ be a locally finitely presented Grothendieck category. Then the following hold:
\begin{itemize}
\item[(i)] The class $\FPI(\class A)$ is closed under extensions, direct unions, products, coproducts, and pure subobjects.
\item[(ii)] An object $A\in\class A$ belongs to $\FPI(\class A)$ if and only if any monomorphism with source $A$ is pure.
\end{itemize}
Moreover, the category $\class A$ is locally coherent if and only if the class $\FPI(\class A)$ is closed under filtered colimits if and only if the class $\FPI(\class A)$ is closed under cokernels of monomorphisms.
\end{fact}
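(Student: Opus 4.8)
The plan is to establish (ii) first, since it is the technical core, then to deduce the closure properties of (i) from it together with the long exact sequence for $\Ext^{1}_{\class A}(F,-)$, $F\in\fp(\class A)$, and to treat the equivalences with local coherence last.

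For (ii), suppose first that every monomorphism out of $A$ is pure. Given $F\in\fp(\class A)$ and an extension $0\to A\to B\to F\to 0$, purity makes $\Hom_{\class A}(F,B)\to\Hom_{\class A}(F,F)$ surjective, so the identity of $F$ lifts and the sequence splits; hence $\Ext^{1}_{\class A}(F,A)=0$, i.e.\ $A\in\FPI(\class A)$. Conversely, let $A\in\FPI(\class A)$ and let $A\rightarrowtail B$ be a monomorphism with cokernel $C$. Write $C=\colim_{i}C_{i}$ as a filtered colimit of finitely presented objects and pull the extension back along each structure map $C_{i}\to C$; the resulting extensions $0\to A\to B_{i}\to C_{i}\to 0$ split because $\Ext^{1}_{\class A}(C_{i},A)=0$. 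Since filtered colimits are exact in a Grothendieck category, the comparison map $\colim_{i}B_{i}\to B$ is an isomorphism (five lemma), so $0\to A\to B\to C\to 0$ is a filtered colimit of split short exact sequences, hence pure exact.

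Granting (ii), part (i) is largely formal. Closure under extensions is the long exact $\Ext^{1}_{\class A}(F,-)$-sequence. If $0\to A\to B\to C\to 0$ is pure exact with $B\in\FPI(\class A)$, then for $F\in\fp(\class A)$ the surjectivity of $\Hom_{\class A}(F,B)\to\Hom_{\class A}(F,C)$ together with $\Ext^{1}_{\class A}(F,B)=0$ forces $\Ext^{1}_{\class A}(F,A)=0$, so $\FPI(\class A)$ is closed under pure subobjects. For products, and for a direct union $A=\bigcup_{i}A_{i}$, one embeds $\prod_{i}A_{i}$ (resp.\ $A$) into $\prod_{i}E_{i}$, where $E_{i}$ is an injective envelope of $A_{i}$ (resp.\ into an injective object $E$), and checks --- using (ii) and that $\Hom_{\class A}(F,-)$ preserves filtered colimits for $F\in\fp(\class A)$ --- that this monomorphism is pure; one then concludes by the pure-subobject case. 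Closure under coproducts is then immediate, since $\coprod_{i}A_{i}$ is the directed union of its finite subcoproducts, each of which is a finite product of FP-injectives.

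It remains to prove the equivalences with local coherence, which is the delicate part. If $\class A$ is locally coherent then $\fp(\class A)$ is abelian, so every $F\in\fp(\class A)$ admits a presentation $0\to Z\to P\to F\to 0$ with $P,Z\in\fp(\class A)$; iterating and dimension shifting --- cleanly when $\fp(\class A)$ has enough projectives, as for module categories, and in general by working in the functor category $\mathrm{Lex}(\fp(\class A)^{\mathrm{op}},\mathrm{Ab})$, to which $\class A$ is equivalent --- one obtains $\Ext^{n}_{\class A}(F,A)=0$ for all $n\ge 1$, $F\in\fp(\class A)$, $A\in\FPI(\class A)$. From the long exact sequence this yields closure of $\FPI(\class A)$ under cokernels of monomorphisms, while the same vanishing makes $\Ext^{1}_{\class A}(F,-)$ commute with filtered colimits, hence $\FPI(\class A)$ is closed under filtered colimits. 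For the converse implications one argues contrapositively: if $\class A$ is not locally coherent, some finitely presented object $F$ has a finitely generated subobject that is not finitely presented, and exploiting the resulting non-finitely-generated syzygy one manufactures --- in Bass--Papp style, or via character-module arguments relating flat and FP-injective modules in the module case --- a filtered colimit, and likewise a cokernel of a monomorphism, of injective objects which is not FP-injective, as detected by a nonzero $\Ext^{1}_{\class A}(F,-)$. The main obstacle is this last construction witnessing non-coherence; everything before it is formal homological algebra.
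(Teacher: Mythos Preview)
The paper does not prove this statement at all: it is recorded as a \emph{Fact} with references to Stenstr\"om and to \v{S}{\fontencoding{T1}\selectfont \v{t}}ov\'i\v{c}ek's appendix, and no argument is given in the body of the paper. So there is nothing to compare your proof against; you have supplied what the paper deliberately outsources.

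That said, a brief assessment of your sketch. Your treatment of (ii) is the standard one and is correct; your deductions in (i) from (ii) are fine, including the direct-union argument (the point being that each $A_i\hookrightarrow E$ is pure by (ii), that $E/A\cong\colim_i E/A_i$, and that $\Hom_{\class A}(F,-)$ preserves this filtered colimit for $F\in\fp(\class A)$, so purity of $A\hookrightarrow E$ follows). The locally coherent equivalences are where your sketch gets thin. The forward implication via dimension shifting is exactly right over rings, but in a general locally coherent Grothendieck category $\class A$ you may lack finitely presented projectives, so the sentence ``iterating and dimension shifting'' needs the workaround you allude to. Note that $\mathrm{Lex}(\fp(\class A)^{\mathrm{op}},\mathrm{Ab})$ is just $\class A$ again and gives you no new projectives; what actually works is passing to the full functor category $(\fp(\class A)^{\mathrm{op}},\mathrm{Ab})$, where representables are projective and coherence of $\fp(\class A)$ ensures syzygies of representables stay finitely presented, then checking that the relevant $\Ext$-groups agree with those in $\class A$. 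For the converse implications your ``Bass--Papp style'' gesture is quite vague; since the paper simply cites \v{S}{\fontencoding{T1}\selectfont \v{t}}ov\'i\v{c}ek for this, you might either do the same or spell out the construction of a non-FP-injective filtered colimit of injectives witnessing non-coherence.
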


\begin{ipg}\textbf{The derived category of an exact category.} \cite{Neeman-derived-cat-of-exact-cat} 
\label{The derived cat of an exact cat}
Let $\class E$ be a (Quillen) exact category and denote by $\Ch(\class E)$ the corresponding category of chain complexes. $\Ch(\class E)$ has a canonical exact structure in which a diagram $X \rightarrowtail Y \twoheadrightarrow Z$ in $\Ch(\class E)$ is a conflation if and only if $X_n \rightarrowtail Y_n \twoheadrightarrow Z_n$ is a conflation in $\class E$ for every $n \in \mathbb{Z}$; see B\"uhler \cite[Lem.~9.1]{Buhler-exact-cat}. We will refer to this exact structure as the \textit{induced exact structure} on $\Ch(\class E)$.

A complex $X=\cdots\rightarrow X_{n+1}\xrightarrow{d_{n+1}} X_{n}\xrightarrow{d_{n}} X_{n-1}\rightarrow\cdots$ in $\Ch(\class E)$ is called \textit{acyclic} (with respect to the exact structure of $\class E$) if each map $d_{n}$ decomposes in $\class E$ as a deflation $X_{n}\twoheadrightarrow Z_{n-1}(X)$ followed by an inflation $Z_{n-1}(X)\rightarrowtail X_{n-1}$ and such that the induced sequence $Z_{n}(X)\rightarrowtail X_{n}\twoheadrightarrow Z_{n-1}(X)$ is a conflation in $\class E$. Denote by $\mathbf{K}_{\mathrm{ac}}(\class E)$ the homotopy category of acyclic complexes. If the exact category $\class E$ has split idempotents, then $\mathbf{K}_{\mathrm{ac}}(\class E)$ is an \'epaisse (thick) subcategory of $\mathbf{K}(\class E)$ \cite[1.2]{Neeman-derived-cat-of-exact-cat} and then by definition \cite[1.5]{Neeman-derived-cat-of-exact-cat} the \textit{derived category} of $\class E$ is the Verdier quotient $\mathbf{D}(\class E):=\mathbf{K}(\class E)/\mathbf{K}_{\mathrm{ac}}(\class E)$.
\end{ipg}

\section{On the homotopy category of fp-injectives}

Let $\class A$ is a locally finitely presented Grothendieck category, viewed as an exact category with the pure exact structure, as in \ref{Purity}. Then the acyclic complexes in $\class A$ (with respect to this exact structure) are called \textit{pure acyclic complexes} and are denoted by $\Ch_{\mathrm{pac}}(\class A)$. Moreover, the subcategory of FP-injective objects in $\class A$ is closed under extensions, therefore it is an exact category. It is easy to see that the acyclic complexes in $\Ch(\FPI(\class A))$ (with respect to the exact category $\FPI(\class A))$ are the acyclic complexes (in the usual sense) with cycles in $\FPI(\class A)$. Equivalently, since the class $\FPI(\class A)$ is closed under pure subobjects (\ref{Facts for FPI}), they are the pure acyclic complexes with components FP-injectives. Thus we will denote them by $\Ch_{\mathrm{pac}}(\FPI(\class A))$. 
Then by definition (\ref{The derived cat of an exact cat}) we have $\mathbf{D}(\FPI(\class A)):=\mathbf{K}(\FPI(\class A))/\mathbf{K}_{\mathrm{pac}}(\FPI(\class A))$. 

In Theorem \ref{main thm}, we identify $\mathbf{D}(\FPI(\class A))$ with a certain homotopy category. The key ingredient is to extend a characterization of the pure acyclic complexes which is due to Emmanouil.
In \cite{Emm-pure-acyclic} Emmanouil proves that a complex $X$ is pure acyclic if and only if any chain map from a complex of pure projectives to $X$ is null homotopic. Emmanouil's proof is self-contained, while Simson \cite{Simson-pure-acyclic} and also \v{S}{\fontencoding{T1}\selectfont \v{t}}ov\'i\v{c}ek \cite{Stoviceck-on-purity} give a functorial proof of this result by reducing it to Neeman's \cite[Thm. 8.6]{ANm08}.

We first recall a useful and well known lemma we will need.

\begin{lemma}
\label{lemma}
Let $\class A$ be an exact category and consider $\Ch(\class A)$ with the induced exact structure (as in \ref{The derived cat of an exact cat}). If $X,Y\in\Ch(\class A)$, denote by $\Ext^{1}_{\Ch(\class A)}(X,Y)$ the abelian group of (Yoneda) extensions with respect to the induced exact structure, and by $\Ext^{1}_{\mathrm{dw}(\class A)}(X,Y)$ the subgroup consisting of extensions $Y\rightarrowtail T\twoheadrightarrow X$ which are degreewise split. Then we have natural isomorphisms
\[\Ext^{1}_{\mathrm{dw(\class A)}}(X,\Sigma^{-(n+1)}Y)\cong\mathrm{H}_{n}\Hom_{\class A}(X,Y)\cong\Hom_{\mathbf{K}(\class A)}(X,\Sigma^{-n}Y).\]
\end{lemma}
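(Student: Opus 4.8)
The plan is to establish the two isomorphisms separately. The right-hand one, $\mathrm{H}_{n}\Hom_{\class A}(X,Y)\cong\Hom_{\mathbf{K}(\class A)}(X,\Sigma^{-n}Y)$, is essentially a matter of unwinding definitions: the total $\Hom$-complex has $\Hom_{\class A}(X,Y)_{n}=\prod_{i}\Hom_{\class A}(X_{i},Y_{i+n})$ with differential $\partial(f)=d_{Y}f-(-1)^{n}fd_{X}$, so an $n$-cycle is exactly a chain map $X\to\Sigma^{-n}Y$ and an $n$-boundary is exactly a null-homotopic such map; passing to homology gives the identification, which is visibly natural in both arguments. For the left-hand isomorphism I would first reduce to the case $n=0$: replacing $Y$ by $\Sigma^{-n}Y$ and using the natural identifications $\mathrm{H}_{n}\Hom_{\class A}(X,Y)\cong\mathrm{H}_{0}\Hom_{\class A}(X,\Sigma^{-n}Y)$ and $\Ext^{1}_{\mathrm{dw}(\class A)}(X,\Sigma^{-(n+1)}Y)=\Ext^{1}_{\mathrm{dw}(\class A)}(X,\Sigma^{-1}(\Sigma^{-n}Y))$ (the suspension $\Sigma$ being an exact autoequivalence of $\Ch(\class A)$ for the degreewise split structure), it suffices to construct a natural isomorphism $\Ext^{1}_{\mathrm{dw}(\class A)}(X,\Sigma^{-1}Y)\cong\mathrm{H}_{0}\Hom_{\class A}(X,Y)=\Hom_{\mathbf{K}(\class A)}(X,Y)$.

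Next I would check that the degreewise split short exact sequences form an exact substructure of the induced exact structure on $\Ch(\class A)$ — pushouts and pullbacks of degreewise split conflations along chain maps remain degreewise split — so that $\Ext^{1}_{\mathrm{dw}(\class A)}(-,-)$ is a well-defined bifunctor, computed by Yoneda equivalence classes of such conflations (no projectives or injectives are needed). The bijection then goes as follows. In one direction, a chain map $f\colon X\to Y$ is sent to the class of the mapping cone conflation $Y\rightarrowtail C(f)\twoheadrightarrow\Sigma X$, which is degreewise split, transported through the reindexing $\Ext^{1}_{\mathrm{dw}(\class A)}(\Sigma X,Y)\cong\Ext^{1}_{\mathrm{dw}(\class A)}(X,\Sigma^{-1}Y)$. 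In the other direction, given a degreewise split conflation $\Sigma^{-1}Y\xrightarrow{\iota}T\xrightarrow{\pi}X$, one chooses degreewise sections to identify the underlying graded object of $T$ with $X\oplus\Sigma^{-1}Y$; the differential of $T$ then takes the lower-triangular form $\left(\begin{smallmatrix}d_{X}&0\\ h&d_{\Sigma^{-1}Y}\end{smallmatrix}\right)$, and the identity $d_{T}^{2}=0$ forces the off-diagonal component $h$ to assemble into a chain map $X\to Y$; changing the sections alters $h$ by a coboundary of the $\Hom$-complex, i.e. by a null-homotopic map, so the homotopy class of $h$ is well defined.

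It remains to verify that these two assignments are mutually inverse, additive (Baer sum of extensions corresponding to sum of maps), and natural in $X$ and $Y$. This bookkeeping is the bulk of the work and the part I expect to be the main obstacle: one must show that the mapping-cone construction sends chain-homotopic maps to Yoneda-equivalent extensions, that feeding $C(f)$ back into the second construction returns the homotopy class of $f$, and — the most delicate point — that Yoneda equivalence of degreewise split conflations corresponds exactly to chain homotopy of the associated maps $h$, not to a coarser relation. Keeping the suspension and sign conventions consistent throughout adds friction but no real difficulty. Once the case $n=0$ is settled, the general statement follows from the reduction above, with every isomorphism in sight manifestly natural.
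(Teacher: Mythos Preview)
The paper does not actually prove this lemma: it is introduced as ``a useful and well known lemma'' and stated without proof, so there is nothing to compare your argument against line by line. Your proposal is the standard argument for this folklore result and is correct; the right-hand isomorphism is indeed just unwinding the Hom-complex, and your treatment of the left-hand isomorphism via the mapping-cone/off-diagonal correspondence is exactly how one proves it (cf.\ B\"uhler's survey on exact categories or Gillespie's work on model structures on chain complexes, where this identification is used in the same way). The only comment worth making is that the verification you flag as ``the most delicate point''---that Yoneda equivalence of degreewise split conflations matches chain homotopy of the associated maps---is in fact routine once one observes that a morphism of extensions which is the identity on $X$ and on $\Sigma^{-1}Y$ is, in the graded-split coordinates, of the form $\left(\begin{smallmatrix}1&0\\ s&1\end{smallmatrix}\right)$, and the chain-map condition for this matrix is precisely the homotopy relation between the two off-diagonal terms.
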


\begin{proposition} \textnormal{(Compare with \cite{Emm-pure-acyclic})}
\label{Emm +}
Let $\class A$ be a locally finitely presented Grothendieck category and let $X$ be a chain complex in $\class A$. Then the following are equivalent:
\begin{itemize}
\item[(i)] $X$ is a pure acyclic complex.
\item[(ii)] Any chain map from a complex in $\Ch(\PProj(\class A))$ to $X$ is null-homotopic.
\item[(iii)] Any chain map from a complex in $\Ch(\leftidx{^{\bot}}\FPI(\class A))$ to $X$ is null-homotopic.
\end{itemize}
In particular, any pure acyclic complex with components in $\leftidx{^{\bot}}\FPI(\class A)$ is contractible.
\end{proposition}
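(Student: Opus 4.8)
The equivalence (i)$\Leftrightarrow$(ii) is Emmanouil's theorem \cite{Emm-pure-acyclic}, so the task is to interpolate (iii), and the closing assertion is then the instance of (i)$\Rightarrow$(iii) with source complex $X$ and chain map $\mathrm{id}_{X}$: it forces $\mathrm{id}_{X}$ to be null-homotopic, i.e.\ $X$ to be contractible. The implication (iii)$\Rightarrow$(ii) is formal: a pure projective object is a direct summand of a coproduct of finitely presented objects, finitely presented objects lie in $\leftidx{^{\bot}}\FPI(\class A)$ by the definition of FP-injectivity, and $\leftidx{^{\bot}}\FPI(\class A)$, being a left $\Ext^{1}_{\class A}$-orthogonal, is closed under coproducts and direct summands; hence $\Ch(\PProj(\class A))\subseteq\Ch(\leftidx{^{\bot}}\FPI(\class A))$. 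So everything comes down to (i)$\Rightarrow$(iii).

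For this, let $X$ be pure acyclic, let $P$ be a complex with $P_{n}\in\leftidx{^{\bot}}\FPI(\class A)$ for all $n$, and let $f\colon P\to X$ be a chain map. I would first normalise $P$. By Fact \ref{facts for our favorite cotorsion pair}(ii) choose $K_{n}\in\leftidx{^{\bot}}\FPI(\class A)$ with $Q_{n}:=P_{n}\oplus K_{n}\in\Filt(\fp(\class A))$, and put $\bar P:=P\oplus\bigoplus_{n\in\mathbb Z}\bigoplus_{i\in\mathbb N}\mathrm{cone}(\mathrm{id}_{Q_{n}})$, where each $\mathrm{cone}(\mathrm{id}_{Q_{n}})$ has $Q_{n}$ in degrees $n$ and $n+1$. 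A coproduct of cones of identity maps is contractible, so $\bar P\simeq P$ in $\mathbf{K}(\class A)$; moreover $\bar P_{m}\cong P_{m}\oplus Q_{m}^{(\mathbb N)}\oplus Q_{m-1}^{(\mathbb N)}$, which lies in $\Filt(\fp(\class A))$ because $P_{m}\oplus Q_{m}^{(\mathbb N)}=P_{m}\oplus(P_{m}\oplus K_{m})^{(\mathbb N)}\cong(P_{m}\oplus K_{m})^{(\mathbb N)}$ and $\Filt(\fp(\class A))$ is closed under countable coproducts. As $\Hom_{\mathbf{K}(\class A)}(-,X)$ is a homotopy invariant, we may assume $P_{n}\in\Filt(\fp(\class A))$ for every $n$. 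Now I would follow Emmanouil's strategy: it suffices to see that $\Hom_{\mathbf{K}(\class A)}(P,X)=0$, so one passes to the total Hom complex $\Hom_{\class A}(P,X)$ of abelian groups, whose degree-zero homology is this group, and shows it is acyclic by filtering $P$ by complexes concentrated in a single degree at a finitely presented object, and by reducing the unbounded case to the bounded one by means of a vanishing-$\varprojlim^{1}$ argument for the tower of brutal truncations of $P$ — the base case being that, for $F\in\fp(\class A)$, the functor $\Hom_{\class A}(F,-)$ carries the defining conflations of a pure acyclic complex to short exact sequences.

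The step I expect to be the real obstacle is making this last reduction work for the enlarged class $\Filt(\fp(\class A))$. In Emmanouil's setting the components are pure projective and the truncation and filtration arguments can be run degreewise, because the relevant filtrations may be taken degreewise split; but an $\fp(\class A)$-filtration of a general object of $\Filt(\fp(\class A))$ need not be degreewise split, so one cannot argue componentwise, and the filtration step must instead be carried out inside $\Ch(\class A)$ for the cotorsion pair induced there by $(\leftidx{^{\bot}}\FPI(\class A),\FPI(\class A))$. It is at this point that the completeness of that cotorsion pair — hence the standing hypothesis that $\class A$ is locally finitely presented — together with Fact \ref{facts for our favorite cotorsion pair}(ii), must enter essentially, well beyond the reduction made in the previous step.
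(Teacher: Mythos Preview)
Your reductions and your normalisation step are correct; the Eilenberg swindle is a pleasant alternative to the paper's device, which instead forms $Y':=Y\oplus\bigl(\bigoplus_n D_n(J_n)\bigr)\oplus\Sigma^{-1}Y$ so that the degree-$n$ component becomes $F_n\oplus F_{n+1}$ without any countable coproduct trick. But the proof is not finished: you yourself flag the real obstacle and then do not resolve it. Once every $P_n$ lies in $\Filt(\fp(\class A))$ you still have to prove $\Hom_{\mathbf{K}(\class A)}(P,X)=0$, and your proposed route via brutal truncations and a $\varprojlim^1$ argument only reduces to stalks $P_n[n]$ with $P_n\in\Filt(\fp(\class A))$; for these you would need $\Hom_{\class A}(P_n,-)$ to be exact on the pure conflations of $X$, and that is exactly what fails when $P_n$ is merely in $\leftidx{^{\bot}}\FPI(\class A)$ rather than pure projective. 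Refining each $P_n$ by its own $\fp(\class A)$-filtration does not help as stated, because componentwise filtrations need not assemble into a filtration of the \emph{complex} $P$ compatible with the differentials.

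The paper fills precisely this gap with two ingredients you do not invoke. First, a Hill-lemma result of \v{S}{\fontencoding{T1}\selectfont \v{t}}ov\'i\v{c}ek \cite[Prop.~4.3]{Stoviceck-Hill-lemma}: if every component of $P$ is $\fp(\class A)$-filtered, then $P$ itself is $\Ch^{-}(\fp(\class A))$-filtered, i.e.\ admits a continuous chain of subcomplexes whose successive quotients $Q$ are bounded-below complexes of finitely presented objects. Second, one works in the exact category $\Ch(\class A)_{\mathrm{pure}}$: for each such $Q$ one has
\[
\Ext^{1}_{\Ch(\class A)_{\mathrm{pure}}}(Q,X)=\Ext^{1}_{\mathrm{dw}(\class A)}(Q,X)\cong\Hom_{\mathbf{K}(\class A)}(Q,\Sigma X)=0,
\]
the first equality because the components of $Q$ are finitely presented, hence pure projective, and the last by hypothesis~(ii) applied to the pure acyclic complex $\Sigma X$. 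Eklof's lemma in its exact-category form \cite[Prop.~2.12]{Saorin-Stoviceck-exact} then yields $\Ext^{1}_{\Ch(\class A)_{\mathrm{pure}}}(P,X)=0$, hence $\Ext^{1}_{\mathrm{dw}(\class A)}(P,X)=0$, hence $\Hom_{\mathbf{K}(\class A)}(P,\Sigma X)=0$; since $X$ was an arbitrary pure acyclic complex this gives~(iii). Note that the argument routes through (ii)$\Rightarrow$(iii), not (i)$\Rightarrow$(iii): Emmanouil's theorem is used once more, applied to the filtration quotients~$Q$.
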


\begin{proof}
As we discussed above the assertions $(i)\Leftrightarrow(ii)$ have been proved in \cite{Emm-pure-acyclic}. Moreover, $(iii)\Rightarrow(ii)$ is trivial, thus we are left with $(ii)\Rightarrow(iii)$. 
First consider the case where we are given a chain map $Y\rightarrow X$, with $Y$ having components in $\Filt(\fp(\class A))$. From the fact that each component of $Y$ is $\fp(\class A)$-filtered, a result of \v{S}{\fontencoding{T1}\selectfont \v{t}}ov\'i\v{c}ek \cite[Prop.~4.3]{Stoviceck-Hill-lemma} implies that $Y$ itself is $\Ch^{-}(\fp(\class A))$-filtered, that is, $Y$ is given as a continuous chain of subcomplexes
\[0=Y_{0}\subseteq Y_{1}\subseteq ...\subseteq \bigcup\limits_{\alpha<\sigma}Y_{\alpha}=Y\]
where $\sigma$ is an ordinal, $Y_{\lambda}=\cup_{\beta<\lambda}Y_{\beta}$ for all limit ordinals $\lambda$, and for all $\alpha<\sigma$ the quotient $Y_{\alpha+1}/Y_{\alpha}$ is a bounded below complex with components finitely presented objects. Now, denote by $\Ch(\class A)_{\mathrm{pure}}$ the exact category of chain complexes with the induced pure exact structure (as in \ref{The derived cat of an exact cat}). For all ordinals $\alpha<\sigma$ we have 

\begin{equation}
\begin{split}
\Ext^{1}_{\Ch(\class A)_{\mathrm{pure}}}(Y_{\alpha+1}/Y_{\alpha},X) & =  \Ext_{\mathrm{dw}(\class A)}^{1}(Y_{\alpha+1}/Y_{\alpha},X) \\
 & \cong \Hom_{\mathbf{K}(\class A)}(Y_{\alpha+1}/Y_{\alpha},\Sigma^{1}X) \\
 & = 0 \nonumber
 \end{split}
\end{equation}
where the first equality holds because each degreewise pure extension of a complex with pure projective components is degreewise split exact, the isomorphism is obtained by Lemma \ref{lemma} and the
last equality follows by assumption. Hence Eklof's lemma \cite[Lemma 1]{PCEJTr01}, in its version for exact categories \cite[Prop. 2.12]{Saorin-Stoviceck-exact}, gives the result.
Now consider the case where $Y$ has components in $\leftidx{^{\bot}}\FPI(\class A)$. Then from \ref{facts for our favorite cotorsion pair} we know that for all $n\in\mathbb{Z}$ there exists $J_{n}$ such that $Y_{n}\oplus J_{n}\cong F_{n}$, where $F_{n}$ is $\fp(\class A)$-filtered. Consider for each $n$ the disc complex $D_{n}(J_{n})=0\rightarrow J_{n}=J_{n}\rightarrow 0$, which is concentrated in homological degrees $n$ and $n-1$. Then the complex
\[Y':=Y\oplus\left(\bigoplus\limits_{n\in\mathbb{Z}}D_{n}(J_{n})\right)\oplus \Sigma^{-1}Y\]
has components of the form $F_{n}\oplus F_{n+1}$, and these are $\fp(\class A)$-filtered. Then from the previous treated case we have that $\Hom_{\mathbf{K}(\class A)}(Y',X)=0$, thus $\Hom_{\mathbf{K}(\class A)}(Y,X)=0$ too. Finally, by what we have proved, if $X$ is a pure acyclic complex with components in the class $\leftidx{^{\bot}}\FPI(\class A)$, then the identity map on $X$ is null homotopic, in other words $X$ is contractible.
\end{proof}

As a corollary we obtain a result on pure periodicity which extends the following fact: if $M$ is a module fitting into a pure exact sequence $0\rightarrow M\rightarrow P\rightarrow M\rightarrow 0$ with $P$ pure projective, then $M$ is pure projective as well. In other words, every $\PProj(\class A)$--pure periodic module is pure projective. This result was first proved by Simson \cite[Thm.~1.3]{Simson-pacific} and recently by Emmanouil \cite[Cor.~3.6]{Emm-pure-acyclic}. We point out that in \cite{Estrada-Bazzoni-Izurdiaga} the authors provide a proof of this result and also a proof of the dual statement.

Our version below extends the case of $\PProj(\class A)$--pure periodicity to the case of $\leftidx{^{\bot}}\FPI(\class A)$--pure periodicity.

\begin{corollary}
\textnormal{(Compare with \cite{Emm-pure-acyclic,Simson-pacific})}
Let $\class A$ be a locally finitely presented Grothendieck category and let $M$ be an object in $\class A$ admitting a pure short exact sequence of the form \[0\rightarrow M\rightarrow F\rightarrow M\rightarrow 0\]
where $F\in\leftidx{^{\bot}}\FPI(\class A)$. Then $M\in\leftidx{^{\bot}}\FPI(\class A)$.
In other words, any $\leftidx{^{\bot}}\FPI(\class A)$--pure periodic object belongs to the class $\leftidx{^{\bot}}\FPI(\class A)$.
\end{corollary}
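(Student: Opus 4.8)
The plan is to deduce this corollary from Proposition \ref{Emm +} by building an infinite periodic complex out of the given short exact sequence and recognizing it as a pure acyclic complex with components in $\leftidx{^{\bot}}\FPI(\class A)$. Concretely, first I would splice countably many copies of the pure short exact sequence $0\to M\to F\to M\to 0$ together to form a complex
\[
X:\qquad \cdots\to F\to F\to F\to\cdots
\]
with every differential the composite $F\twoheadrightarrow M\rightarrowtail F$. By construction each differential factors through $M$ as a pure epimorphism followed by a pure monomorphism, and the induced sequences on cycles are exactly the given conflation $M\rightarrowtail F\twoheadrightarrow M$ in the pure exact structure; hence $X$ is a pure acyclic complex in the sense of \ref{Purity}. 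Moreover every component of $X$ equals $F\in\leftidx{^{\bot}}\FPI(\class A)$.

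Next I would invoke the final sentence of Proposition \ref{Emm +}: any pure acyclic complex with components in $\leftidx{^{\bot}}\FPI(\class A)$ is contractible. Applying this to $X$ gives a contracting homotopy, and in particular the cycle object $M=Z_n(X)$ becomes a direct summand of the component $F$ (a standard consequence of contractibility: if $X$ is contractible then each short exact sequence $Z_{n}(X)\rightarrowtail X_{n}\twoheadrightarrow Z_{n-1}(X)$ splits). Thus $M$ is a direct summand of $F\in\leftidx{^{\bot}}\FPI(\class A)$. Since $\leftidx{^{\bot}}\FPI(\class A)$ is the left-hand class of a cotorsion pair, it is closed under direct summands, and therefore $M\in\leftidx{^{\bot}}\FPI(\class A)$, as desired. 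The last assertion of the corollary is just a restatement of what has been proved, since a $\leftidx{^{\bot}}\FPI(\class A)$--pure periodic object is by definition one sitting in such a pure exact sequence with middle term in $\leftidx{^{\bot}}\FPI(\class A)$.

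The only point requiring a little care — and what I expect to be the main (minor) obstacle — is verifying cleanly that the spliced complex $X$ is genuinely pure acyclic, i.e.\ that splicing pure short exact sequences yields a complex whose differentials are pure deflations followed by pure inflations with the correct conflations on cycles; this is routine but uses that $\class A$ with the pure exact structure is a bona fide exact category (\ref{Purity}) so that composites and the relevant factorizations behave well. Everything else is a direct application of Proposition \ref{Emm +} together with closure of $\leftidx{^{\bot}}\FPI(\class A)$ under summands.
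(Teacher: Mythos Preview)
Your proposal is correct and follows essentially the same argument as the paper: splice the given pure exact sequence into a periodic pure acyclic complex with components in $\leftidx{^{\bot}}\FPI(\class A)$, invoke Proposition~\ref{Emm +} to conclude contractibility, and deduce that $M$ is a summand of $F$ and hence lies in $\leftidx{^{\bot}}\FPI(\class A)$ by closure under summands. Your added remarks on verifying pure acyclicity of the spliced complex and on why contractibility splits the cycle sequences are fine elaborations but do not deviate from the paper's line of reasoning.
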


\begin{proof}
The argument is identical as in \cite[Cor. 3.8]{Emm-pure-acyclic}, but invoking \ref{Emm +}. Namely, we may splice copies of the given short exact sequence to obtain a pure acyclic complex with components in $\leftidx{^{\bot}}\FPI(\class A)$, thus a contractible complex. Hence $M$ is a summand of $F\in\leftidx{^{\bot}}\FPI(\class A)$ and the assertion follows since the class $\leftidx{^{\bot}}\FPI(\class A)$ is closed under summands. 
\end{proof}

We now relate Proposition \ref{Emm +} with the theory of cotorsion pairs. 

\begin{lemma}
\label{lemma-pairs}
Let $\class A$ be a locally finitely presented Grothendieck category and let $\class C:=\Ch(\FPI(\class A)\cap\leftidx{^{\bot}}\FPI(\class A))$ and\, $\class W:=\Ch_{\mathrm{pac}}(\FPI(\class A))$. Then the following hold.
\begin{itemize}
\item[(i)] $(\Ch(\leftidx{^{\bot}}\FPI(\class A)),\class W)$ is a cotorsion pair in $\Ch(\class A)$.
\item[(ii)] If $\class A$ is locally coherent, then the cotorsion pair of (i) is complete. Moreover, in this case the pair $(\class C,\class W)$ is a complete and hereditary cotorsion pair in $\Ch(\FPI(\class A))$.
\end{itemize}
\end{lemma}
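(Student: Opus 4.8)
The plan is to deduce part (i) directly from Proposition \ref{Emm +} together with Lemma \ref{lemma}, and then to bootstrap part (ii) using the Eklof--Trlifaj machinery of \ref{facts for our favorite cotorsion pair} adapted to the exact category $\Ch(\FPI(\class A))$. First, for (i) one orthogonality inclusion is essentially tautological: if $W\in\class W=\Ch_{\mathrm{pac}}(\FPI(\class A))$ and $Y\in\Ch(\leftidx{^{\bot}}\FPI(\class A))$, then by \ref{Emm +} any chain map $Y\to W$ is null-homotopic, and since a pure acyclic complex with FP-injective components is genuinely acyclic with cycles in $\FPI(\class A)$, the argument of Lemma \ref{lemma} (the periodicity/extension identification, applied with the pure exact structure of $\Ch(\class A)$) shows $\Ext^{1}_{\Ch(\class A)}(Y,W)=0$; here one uses that a degreewise-pure extension of a complex with pure projective components splits degreewise, as in the displayed computation inside the proof of \ref{Emm +}. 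Conversely one must check both $\{\class W\}^{\bot}$ computed inside $\Ch(\class A)$ recovers $\Ch(\leftidx{^{\bot}}\FPI(\class A))$ and that $\leftidx{^{\bot}}\class W$ does too; the first uses that pure acyclic complexes contain the pure disc complexes $D_{n}(F)$ for $F\in\fp(\class A)$, so any object right-orthogonal to all of $\class W$ has finitely presented objects right-orthogonal to each component, forcing the components into $\FPI(\class A)$, and then a dimension-shifting argument upgrades this to the cycles being FP-injective; the reverse inclusion similarly tests against the pure spheres built from $\fp(\class A)$-presentations.

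For part (ii), assume $\class A$ locally coherent, so by \ref{Facts for FPI} the class $\FPI(\class A)$ is closed under filtered colimits and cokernels of monomorphisms, hence $\FPI(\class A)$ is a nice exact subcategory and $\Ch(\FPI(\class A))$ inherits a well-behaved induced exact structure. The strategy is to realize $(\Ch(\leftidx{^{\bot}}\FPI(\class A)),\class W)$ as the cotorsion pair generated by a set. The candidate generating set $\class S$ consists of the pure spheres and pure discs on a representative set of finitely presented objects of $\class A$ (these are bounded complexes with $\fp(\class A)$-components), which indeed live in $\Ch(\leftidx{^{\bot}}\FPI(\class A))$, together with enough objects to ensure $\class S$ contains a generator of $\Ch(\class A)$. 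One then checks $\class S^{\bot}=\class W$: an object right-orthogonal to all pure discs $D_{n}(F)$ is degreewise FP-injective, and right-orthogonal to the pure spheres $S^{n}(F)$ forces pure acyclicity, which under local coherence is the same as acyclicity with FP-injective cycles; combined with (i) this gives $\leftidx{^{\bot}}(\class S^{\bot})=\leftidx{^{\bot}}\class W=\Ch(\leftidx{^{\bot}}\FPI(\class A))$. Completeness of the cotorsion pair in $\Ch(\class A)$ now follows from \ref{facts for our favorite cotorsion pair}(i). To pass to a cotorsion pair $(\class C,\class W)$ inside the exact category $\Ch(\FPI(\class A))$, I would intersect: given $X\in\Ch(\FPI(\class A))$, take the approximation sequence $0\to W\to Y\to X\to 0$ from completeness in $\Ch(\class A)$ with $Y\in\Ch(\leftidx{^{\bot}}\FPI(\class A))$ and $W\in\class W$; since $X$ and $W$ have FP-injective components and $\FPI(\class A)$ is closed under extensions, $Y$ does too, so $Y\in\Ch(\leftidx{^{\bot}}\FPI(\class A))\cap\Ch(\FPI(\class A))=\class C$, and the sequence is a conflation in $\Ch(\FPI(\class A))$; the other approximation sequence is obtained dually, using closure of $\FPI(\class A)$ under cokernels of monomorphisms (here local coherence is essential) to keep the middle term in $\Ch(\FPI(\class A))$. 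Finally, heredity: $\class W=\Ch_{\mathrm{pac}}(\FPI(\class A))$ is closed under cokernels of (admissible) monomorphisms because a cokernel of an inflation between acyclic complexes with FP-injective cycles is again such — this is a diagram chase using that $\FPI(\class A)$ is closed under extensions and cokernels of monos — and dually $\class C$ is closed under kernels of deflations, since the class $\leftidx{^{\bot}}\FPI(\class A)$ is resolving in $\Ch(\class A)$ and intersecting with $\Ch(\FPI(\class A))$ preserves this.

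The main obstacle I expect is the passage from the cotorsion pair in $\Ch(\class A)$ to one genuinely living in the exact category $\Ch(\FPI(\class A))$ with the induced (non-abelian) exact structure: one must be careful that the approximation sequences produced by Eklof--Trlifaj, which a priori only have components in $\FPI(\class A)$ for the pieces we control, actually have \emph{all three} terms FP-injective and form \emph{conflations} for the induced exact structure rather than merely short exact sequences in $\class A$. This is exactly where local coherence enters decisively, via closure of $\FPI(\class A)$ under cokernels of monomorphisms from \ref{Facts for FPI}; without it the "dual" approximation sequence need not stay inside $\Ch(\FPI(\class A))$. A secondary technical point is verifying heredity cleanly, for which it is cleanest to note that $(\Ch(\leftidx{^{\bot}}\FPI(\class A)),\class W)$ is hereditary in $\Ch(\class A)$ — because $(\leftidx{^{\bot}}\FPI(\class A),\FPI(\class A))$ is hereditary, $\fp(\class A)$ being closed under kernels of epis in the locally coherent case — and that heredity is inherited by the restricted cotorsion pair.
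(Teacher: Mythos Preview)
Your overall strategy is sound and parallel to the paper's: both use Proposition~\ref{Emm +} as the decisive input for the orthogonality in (i), then pass to the exact subcategory $\Ch(\FPI(\class A))$ in (ii) by intersecting approximation sequences, with local coherence entering exactly where you say (closure of $\FPI(\class A)$ under cokernels of monomorphisms). The principal difference is that the paper does not carry out the spheres-and-discs argument by hand. For (i) it invokes Gillespie \cite[Prop.~3.6]{Gillespie-flat-model-2004}, which already produces an induced cotorsion pair $({}^{\bot}\class W,\class W)$ in $\Ch(\class A)$ together with the description ${}^{\bot}\class W=\{X\in\Ch({}^{\bot}\FPI(\class A))\mid \Hom_{\mathbf{K}(\class A)}(X,\class W)=0\}$; Proposition~\ref{Emm +} then kills the Hom condition in one line, giving ${}^{\bot}\class W=\Ch({}^{\bot}\FPI(\class A))$. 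Completeness in (ii) likewise comes from \cite[Cor.~3.7]{Gillespie-flat-sheaves} rather than a hand-built generating set, and the inclusion $\class C^{\bot}\subseteq\class W$ inside $\Ch(\FPI(\class A))$ is obtained by the splitting trick (take an approximation $X\rightarrowtail W\twoheadrightarrow C$ from the complete pair of (i), note $C$ lands in $\class C$ by coherence, and split).

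Your direct route can be made to work, but the write-up of (i) contains genuine confusions you must fix. First, the discs $D_{n}(F)$ for $F\in\fp(\class A)$ are \emph{not} in $\class W=\Ch_{\mathrm{pac}}(\FPI(\class A))$ in general (their components are finitely presented, not FP-injective), so you cannot test against them when computing orthogonals of $\class W$; to get ${}^{\bot}\class W\subseteq\Ch({}^{\bot}\FPI(\class A))$ you test against $D_{n}(I)$ with $I\in\FPI(\class A)$, and to get $\Ch({}^{\bot}\FPI(\class A))^{\bot}\subseteq\class W$ you test against $D_{n}(L)$ and suitable complexes built from $L\in{}^{\bot}\FPI(\class A)$. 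Second, your sentence ``check both $\class W^{\bot}$ \dots\ recovers $\Ch({}^{\bot}\FPI(\class A))$ and that ${}^{\bot}\class W$ does too'' is not what is needed; the two equalities are ${}^{\bot}\class W=\Ch({}^{\bot}\FPI(\class A))$ and $\Ch({}^{\bot}\FPI(\class A))^{\bot}=\class W$. Third, the reason an extension $W\rightarrowtail T\twoheadrightarrow Y$ in $\Ch(\class A)$ with $Y\in\Ch({}^{\bot}\FPI(\class A))$ and $W\in\class W$ is degreewise split is simply that $\Ext^{1}_{\class A}(Y_{n},W_{n})=0$ because $W_{n}\in\FPI(\class A)$ --- pure projectivity of the components of $Y$ is neither assumed nor needed here.
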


\begin{proof}
(i) Recall that by definition $(\leftidx{^{\bot}}\FPI(\class A),\FPI(\class A))$ is a cotorsion pair which is generated by a set, therefore by \ref{facts for our favorite cotorsion pair} it is complete. Thus, from work of Gillespie \cite[Prop.~3.6]{Gillespie-flat-model-2004}, there exists an induced cotorsion pair $(^{\bot}\class W,\class W)$ in the abelian category $\Ch(\class A)$ where the class $^{\bot}\class W$ can be identified with 
\[^{\bot}\class W=\{X\in\Ch(\leftidx{^{\bot}}\FPI(\class A))\, |\, \forall W\in\class W,\,\, \Hom_{\mathbf{K}(\class A)}(X,W)=0\}.\]
Since every complex in $\class W$ is pure acyclic, \ref{Emm +} implies that $^{\bot}\class W=\Ch(\leftidx{^{\bot}}\FPI(\class A))$, which proves the claim.

(ii) Assume that $\class A$ is locally coherent. In this case, from \ref{Facts for FPI}, we obtain that the complete cotorsion pair $(\leftidx{^{\bot}}\FPI(\class A),\FPI(\class A))$ is also hereditary. Thus, \cite[Cor.~3.7]{Gillespie-flat-sheaves} implies that the cotorsion pair of (i) is complete.

Now, we prove that $(\class C,\class W)$ is a cotorsion pair in $\Ch(\FPI(\class A))$. Let $C\in\class C$ and $W\in\class W$. Invoking Lemma \ref{lemma} we have
\begin{equation}
\Ext^{1}_{\Ch(\FPI(\class A))}(C,W) 
= \Ext_{\mathrm{dw}(\class A)}^{1}(C,W) 
\cong \Hom_{\mathbf{K}(\class A)}(C,\Sigma^{1}W). \nonumber
\end{equation}
Since $W$ is pure acyclic, from part (i) we obtain $^{\bot}\mathcal{W}=\class C$ and $\class W\subseteq\class C^{\bot}$.

To prove the inclusion $\class C^{\bot}\subseteq\class W$, let $X\in\Ch(\FPI(\class A))$ be such that, for all $C\in\class C$,\, $\Ext^{1}_{\Ch(\FPI(\class A))}(C,X)=0$. We need to show that $X\in\class W$. Since the cotorsion pair $(\Ch(\leftidx{^{\bot}}\FPI(\class A)),\class W)$ is complete, there exists a short exact sequence 
$X\rightarrowtail W\twoheadrightarrow C$
with $W\in\class W$ and $C\in\Ch(\leftidx{^{\bot}}\FPI(\class A))$. Since $\class A$ is locally coherent, from \ref{Facts for FPI} we obtain that the complex $C$ has components in $\FPI(\class A)$. By the assumption on $X$ this short exact sequence splits, therefore the fact that $\class W$ is closed under direct summands implies that $X\in\class W$.

Completeness of the cotorsion pair $(\class C,\class W)$ in $\Ch(\FPI(\class A))$ follows easily from the completeness of the cotorsion pair in (i). We show that $(\class C,\class W)$ is hereditary. The category $\class C$, as a subcategory of $\Ch(\FPI(\class A))$, is easily seen to be closed under kernels of epimorphisms. To see that the class $\class W$ is closed under cokernels of monomorphisms let $0\rightarrow A\rightarrow B\rightarrow C\rightarrow 0$ be a short exact sequence in $\Ch(\FPI(\class A))$ with $A, B\in\class W$. Then $C$ is an exact complex and using the fact that in the coherent case $\FPI(\class A)$ is closed under cokernels of monomorphisms (\ref{Facts for FPI}), we obtain that $C$ has cycles in $\FPI(\class A)$, thus $C\in\class W$.
\end{proof}

Recall that if $\class T$ is a triangulated category with set-indexed coproducts, an object $S\in\class T$ is called \textit{compact} if for any family $\{X_{i}\}_{i\in I}$ of objects in $\class T$, the natural map $\coprod_{i\in I} \Hom_{\class T}(X_{i},S)\rightarrow\Hom_{\class T}(\coprod_{i\in I}X_{i},S)$ is an isomorphism. $\class T$ is called \textit{compactly generated} if there exists a set $\class S$ of compact objects in $\class T$, such that for any non-zero $T\in\class T$ there exists a non-zero morphism $S\rightarrow T$ for some $S\in\class S$.

\begin{theorem}
\label{main thm}
Let $\class A$ be a locally finitely presented Grothendieck category. Then the homotopy category $\mathbf{K}(\FPI(\class A)\cap\vphantom{a}\leftidx{^{\bot}}\FPI(\class A))$ is compactly generated.
Moreover, if $\class A$ is locally coherent, the composite functor
\[\mathbf{K}(\FPI(\class A)\cap\leftidx{^{\bot}}\FPI(\class A))\rightarrowtail\mathbf{K}(\FPI(\class A))\xrightarrow{\mathrm{can}}\mathbf{D}(\FPI(\class A))\]
is an equivalence of triangulated categories.
\end{theorem}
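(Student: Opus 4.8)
The plan is to follow the strategy of Neeman \cite{ANm08}, with Proposition \ref{Emm +} as the decisive new input. Write $\class C:=\FPI(\class A)\cap\leftidx{^{\bot}}\FPI(\class A)$. Since $\FPI(\class A)$ (by \ref{Facts for FPI}) and $\leftidx{^{\bot}}\FPI(\class A)$ (being a left $\Ext^{1}$-orthogonal class) are closed under coproducts, so is $\class C$, and hence $\mathbf{K}(\class C)$ has set-indexed coproducts, computed degreewise. To produce compact generators I would extract them from the set $\fp(\class A)$: for $F\in\fp(\class A)$, completeness of the cotorsion pair $(\leftidx{^{\bot}}\FPI(\class A),\FPI(\class A))$ (\ref{facts for our favorite cotorsion pair}(i)) lets one build inductively a coresolution $0\to F\to E^{0}_{F}\to E^{1}_{F}\to\cdots$ with all $E^{i}_{F}\in\class C$: at each step one picks a short exact sequence with middle term in $\FPI(\class A)$ and cokernel in $\leftidx{^{\bot}}\FPI(\class A)$, and closure of $\leftidx{^{\bot}}\FPI(\class A)$ under extensions forces the middle term into $\FPI(\class A)\cap\leftidx{^{\bot}}\FPI(\class A)=\class C$. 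Writing $\mathbf{i}(F)\in\Ch(\class C)$ for the complex $E^{0}_{F}\to E^{1}_{F}\to\cdots$, brutal truncation of the acyclic complex $\cdots\to 0\to F\to E^{0}_{F}\to E^{1}_{F}\to\cdots$ yields a triangle $F[0]\to\mathbf{i}(F)\to\widetilde{\mathbf{i}}(F)\to\Sigma F[0]$ in $\mathbf{K}(\class A)$ with $\widetilde{\mathbf{i}}(F)$ acyclic. I would then claim that $\{\Sigma^{n}\mathbf{i}(F)\mid F\in\fp(\class A),\, n\in\mathbb{Z}\}$ — a set, since $\fp(\class A)$ is — is a set of compact generators of $\mathbf{K}(\class C)$.

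Proving this claim is the main obstacle. For compactness of $\mathbf{i}(F)$ one propagates through the above triangle the fact that $\Hom_{\class A}(F,-)$ commutes with coproducts (true because $F$ is finitely presented). For generation, if $Y\in\mathbf{K}(\class C)$ is right orthogonal to every $\Sigma^{n}\mathbf{i}(F)$, the same triangle (used for all $F$ simultaneously) should reduce the statement to showing $\Hom_{\class A}(F,Y)$ is acyclic for every $F\in\fp(\class A)$; then, since $\fp(\class A)$ contains a generating set, an elementary diagram chase together with \ref{Facts for FPI} forces $Y$ to be exact with cycles in $\FPI(\class A)$, i.e.\ pure acyclic, hence contractible by the last clause of \ref{Emm +} because its components lie in $\leftidx{^{\bot}}\FPI(\class A)$. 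Making these reductions rigorous — in particular controlling the auxiliary acyclic complexes $\widetilde{\mathbf{i}}(F)$ (which need not be \emph{pure} acyclic, so \ref{Emm +} does not kill them outright) — is the delicate point, and it is precisely here that the \emph{genuinely} finitary character of $\fp(\class A)$ must be exploited; this is what upgrades the conclusion from the mere $\aleph_{1}$-compact generation available in Neeman's flat-module setting \cite{ANm08} to honest compact generation. I would carry it out using the deconstructibility of $\class C$ (\ref{facts for our favorite cotorsion pair}(ii)), the Hill lemma for complexes \cite{Stoviceck-Hill-lemma}, and the techniques of \cite{ANm08,Emm-pure-acyclic,Krause-approx-adj}.

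Granting compact generation, the inclusion $\iota\colon\mathbf{K}(\class C)\rightarrowtail\mathbf{K}(\FPI(\class A))$ preserves coproducts and has compactly generated source, so by the Brown representability theorem it admits a right adjoint $\iota^{\rho}$. As $\iota$ is fully faithful, applying $\iota^{\rho}$ to the counit triangle $\iota\iota^{\rho}Y\to Y\to Z\to\Sigma\iota\iota^{\rho}Y$ shows $Z\in\ker\iota^{\rho}$, so $\mathbf{K}(\FPI(\class A))$ admits a semiorthogonal decomposition into $\mathbf{K}(\class C)$ and the thick subcategory $\ker\iota^{\rho}$; consequently the composite $\mathbf{K}(\class C)\rightarrowtail\mathbf{K}(\FPI(\class A))\to\mathbf{K}(\FPI(\class A))/\ker\iota^{\rho}$ is a triangle equivalence. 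Since $\mathbf{D}(\FPI(\class A))=\mathbf{K}(\FPI(\class A))/\mathbf{K}_{\mathrm{pac}}(\FPI(\class A))$ by \ref{The derived cat of an exact cat}, it remains only to prove, assuming $\class A$ locally coherent, that $\ker\iota^{\rho}=\mathbf{K}_{\mathrm{pac}}(\FPI(\class A))$, where $\ker\iota^{\rho}=\{W\in\mathbf{K}(\FPI(\class A))\mid\Hom_{\mathbf{K}(\class A)}(C,W)=0\text{ for all }C\in\mathbf{K}(\class C)\}$.

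The inclusion $\mathbf{K}_{\mathrm{pac}}(\FPI(\class A))\subseteq\ker\iota^{\rho}$ holds with no coherence assumption: the components of any complex in $\class C$ lie in $\leftidx{^{\bot}}\FPI(\class A)$, so Proposition \ref{Emm +}(iii) says every chain map from such a complex to a pure acyclic complex is null-homotopic. For the reverse inclusion I would invoke Lemma \ref{lemma-pairs}(ii), which in the locally coherent case gives the complete cotorsion pair $(\Ch(\class C),\Ch_{\mathrm{pac}}(\FPI(\class A)))$ in $\Ch(\FPI(\class A))$: if $W\in\ker\iota^{\rho}$, then since $\mathbf{K}(\class C)$ is closed under shifts $\Hom_{\mathbf{K}(\class A)}(C,\Sigma W)=0$ for every $C\in\Ch(\class C)$, and since every short exact sequence in $\Ch(\FPI(\class A))$ whose right-hand term has components in $\leftidx{^{\bot}}\FPI(\class A)$ is degreewise split — its degreewise sequences having vanishing $\Ext^{1}_{\class A}$ by the cotorsion pair $(\leftidx{^{\bot}}\FPI(\class A),\FPI(\class A))$ — Lemma \ref{lemma} yields $\Ext^{1}_{\Ch(\FPI(\class A))}(C,W)\cong\Hom_{\mathbf{K}(\class A)}(C,\Sigma W)=0$ for all $C\in\Ch(\class C)$. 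Hence $W$ lies in the right-hand class $\Ch_{\mathrm{pac}}(\FPI(\class A))$ of the cotorsion pair, i.e.\ $W\in\mathbf{K}_{\mathrm{pac}}(\FPI(\class A))$. This establishes $\ker\iota^{\rho}=\mathbf{K}_{\mathrm{pac}}(\FPI(\class A))$ and, with it, the theorem.
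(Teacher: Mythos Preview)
Your overall architecture matches the paper's exactly: prove $\mathbf{K}(\class C)$ compactly generated, apply Brown representability to get a right adjoint of the inclusion into $\mathbf{K}(\FPI(\class A))$, and identify its kernel with $\mathbf{K}_{\mathrm{pac}}(\FPI(\class A))$ via Lemma~\ref{lemma-pairs}(ii). The only substantive difference is in the compact-generation step. The paper does not argue directly but invokes a criterion of Holm and J{\o}rgensen \cite[Thm.~3.1]{HHlPJr07b}: for $\class C$ closed under coproducts and summands, $\mathbf{K}(\class C)$ is compactly generated whenever (i) every finitely presented object admits a right $\class C$-resolution and (ii) every pure exact sequence with terms in $\class C$ splits. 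Both hypotheses are verified exactly as you outline---your coresolution construction for (i), and Proposition~\ref{Emm +} for (ii)---and the resulting compact generators are precisely your complexes $\Sigma^{n}\mathbf{i}(F)$.

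What you propose is, in effect, to reprove that criterion in this instance, and your sketch is correct but you overestimate the difficulty. The auxiliary complex $\widetilde{\mathbf{i}}(F)$ is bounded below and acyclic with \emph{cycles} (not merely components) in $\leftidx{^{\bot}}\FPI(\class A)$; hence for any $Y$ with components in $\FPI(\class A)$ an elementary induction from the bounded end produces a null-homotopy of any chain map $\widetilde{\mathbf{i}}(F)\to Y$. No Hill lemma or deconstructibility is needed. Once $\Hom_{\mathbf{K}(\class A)}(\widetilde{\mathbf{i}}(F),Y)=0$ is in hand, your triangle gives $\Hom_{\mathbf{K}(\class C)}(\Sigma^{n}\mathbf{i}(F),Y)\cong H_{-n}\Hom_{\class A}(F,Y)$, and both compactness (since $F$ is finitely presented) and generation (since such $Y$ is then pure acyclic, hence contractible by \ref{Emm +}) follow exactly as you say.
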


\begin{proof}
Put $\class C:=\FPI(\class A)\cap\leftidx{^{\bot}}\FPI(\class A)$. We will make use of \cite[Thm.~3.1]{HHlPJr07b}, which asserts that for any class of objects $\class C$ which is closed under (set indexed) coproducts and direct summands, the homotopy category $\mathbf{K}(\class C)$ is compactly generated, provided the following hold:
\begin{itemize}
\item[(i)] Every finitely presented object $A$ has a \emph{right $\class C$-resolution} \cite[Dfn.~8.1.2]{rha}, which by definition means that there exists a sequence $0\rightarrow A\rightarrow C_{0}\rightarrow C_{1}\rightarrow\cdots$
with $C_{i}\in\class C$ which is exact after applying functors of the form $\Hom_{\class A}(-,\class C)$.
\item[(ii)] Every pure exact sequence consisting of objects in $\class C$ is split exact.
\end{itemize}
Their result holds for modules over associative rings, but it is clear that it generalizes to our setup. To check condition (i), recall that the cotorsion pair $(\leftidx{^{\bot}}\FPI(\class A),\FPI(\class A))$ is complete, therefore for any $A\in\fp(\class A)$, we can construct an exact sequence \[C(A):=\,\,\,\,\,\,\,\,\,\,\,\,\,\,\,0\rightarrow A\xrightarrow{\partial^{-1}} C_{0}\xrightarrow{\partial^{0}} C_{1}\xrightarrow{\partial^{1}} C_{2}\rightarrow\cdots \]
where $\partial^{-1}$ is a (special) FP-injective preenvelope of $A$ with cokernel $\epsilon_{0}:C_{0}\twoheadrightarrow Z_{0}\in\leftidx{^{\bot}}\FPI(\class A)$, $\partial^{0}=d_{0}\circ\epsilon_{0}$ where $d_{0}$ is an FP-injective envelope of $Z_{0}$ with cokernel $C_{1}\twoheadrightarrow Z_{1}\in\leftidx{^{\bot}}\FPI(\class A)$ etc. Since $\fp(\class A)$ is contained in $\leftidx{^{\bot}}\FPI(\class A)$ and the latter class is closed under extensions, we deduce that for all $i=0,1,...$;\, $C_{i}\in\leftidx{^{\bot}}\FPI(\class A)$. The sequence constructed has all the $C_{i}$'s in $\class C$ and clearly is $\Hom_{\class A}(-,\class C)$--exact, thus it is a right $\class C$--resolution of $A$.

To check condition (ii), let $C:=\,\,\cdots\rightarrow C_{n+1}\rightarrow C_{n}\rightarrow C_{n-1}\rightarrow\cdots$ be a pure exact sequence consisting of objects in $\class C$. In particular, $C$ is a pure acyclic complex with components in $\leftidx{^{\bot}}\FPI(\class A)$, hence by \ref{Emm +} it is contractible. Thus employing \cite[Thm.~3.1]{HHlPJr07b} we obtain that $\mathbf{K}(\class C)$ is compactly generated by the set
$\{\Sigma^{i}C(A)\, |\, A\in\fp(\class A),\,\, i\in\mathbb{Z}\}$.

We now assume that $\class A$ is locally coherent. Since $\mathbf{K}(\class C)$ is compactly generated and the inclusion $j_{!}:\mathbf{K}(\class C)\rightarrow\mathbf{K}(\FPI(\class A))$ preserves coproducts (which exist because $\FPI(\class A)$ is closed under coproducts), by Neeman's Brown representability theorem \cite[Thm.~4.1]{Neeman-Brown-96}, the functor $j_{!}$ admits a right adjoint $j^{*}:\mathbf{K}(\FPI(\class A))\rightarrow\mathbf{K}(\class C)$. The kernel of this right adjoint is $\ker(j^{*})=\{Y\,|\,\forall X\in\mathbf{K}(\class C),\,\, \Hom_{\mathbf{K}(\FPI(\class A))}(X,Y)=0\}$, which by \ref{lemma-pairs} (ii) is precisely the category $\mathbf{K}_{\mathrm{pac}}(\FPI(\class A))$.

Therefore, well known arguments (see for instance \cite[Remark 2.12]{ANm08}) imply that the composite $\mathbf{K}(\class C)\xrightarrow{j_{!}}\mathbf{K}(\FPI(\class A))\xrightarrow{\mathrm{can}}\mathbf{D}(\FPI(\class A))$
is an equivalence of triangulated categories and that the canonical map $\mathbf{K}(\FPI(\class A))\rightarrow\mathbf{D}(\FPI(\class A))$ is equivalent (up to natural isomorphism) with $j^{*}$.
\end{proof}

\begin{remark}
For any locally finitely presented Grothendieck category $\class A$, Krause in \cite[Example 7]{Krause-approx-adj} shows the existence of a left adjoint of the canonical map $\mathbf{K}(\FPI(\class A))\rightarrow\mathbf{D}(\FPI(\class A))$. In the proof of \ref{main thm} above, we obtain such a left adjoint after restricting ourselves to the case where $\class A$ is locally coherent, and we identify its essential image with $\mathbf{K}(\FPI(\class A)\cap\leftidx{^{\bot}}\FPI(\class A)).$
\end{remark}

Before closing this note, we mention that our theorem \ref{main thm} has an interpretation in the language of (Quillen) model categories. By the work of Hovey \cite{hovey} (resp. Gillespie \cite{Gil2011}) we know that certain cotorsion pairs on an abelian (resp.~exact) category $\class A$ correspond bijectively to the so-called abelian (resp.~exact) model structures on the category $\class A$. 
If $\class A$ is a locally coherent Grothendieck category, it is not hard to see that the cotorsion pair on the category $\Ch(\FPI(\class A))$ we obtained in \ref{lemma-pairs}, corresponds (via the aforementioned Hovey--Gillespie theory) to an exact model structure on the category $\Ch(\FPI(\class A))$ with Quillen homotopy category $\mathbf{D}(\FPI(\class A))$. The precise statement is as follows.

\begin{theorem}
\label{Model category}
Let $\class A$ be a locally coherent Grothendieck category and let $\Ch(\FPI(\class A))$ denote the category of chain complexes with components FP-injective objects. Then there exists an (exact) model structure on $\Ch(\FPI(\class A))$, where 
\begin{itemize}
\item[-] the cofibrant objects are the chain complexes in $\Ch(\FPI(\class A)\cap\leftidx{^{\bot}}\FPI(\class A))$.
\item[-] every chain complex in $\Ch(\FPI(\class A))$ is fibrant.
\item[-] the trivial objects are the pure acyclic complexes with FP-injective components.
\end{itemize}
The homotopy category of this model structure is equivalent to $\mathbf{D}(\FPI(\class A))$.
\end{theorem}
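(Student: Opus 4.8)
The plan is to realize the asserted model structure through the Hovey--Gillespie correspondence \cite{hovey,Gil2011} between exact model structures and suitably compatible pairs of complete cotorsion pairs, working in the exact category $\mathcal E:=\Ch(\FPI(\class A))$ equipped with the induced (degreewise) exact structure of \ref{The derived cat of an exact cat}; note that $\mathcal E$ is idempotent complete, since $\class A$ is Grothendieck and $\FPI(\class A)$ is closed under summands (\ref{Facts for FPI}). Concretely, I would exhibit two complete cotorsion pairs on $\mathcal E$ whose associated Hovey triple $(\text{cofibrant},\text{trivial},\text{fibrant})$ is $\big(\Ch(\FPI(\class A)\cap{}^{\bot}\FPI(\class A)),\ \Ch_{\mathrm{pac}}(\FPI(\class A)),\ \mathcal E\big)$.

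The first cotorsion pair is $(\class C,\class W)$ with $\class C:=\Ch(\FPI(\class A)\cap{}^{\bot}\FPI(\class A))$ and $\class W:=\Ch_{\mathrm{pac}}(\FPI(\class A))$; this is complete and hereditary by Lemma \ref{lemma-pairs}(ii), and it is here that local coherence enters. The second is the ``projective'' cotorsion pair $(\Proj(\mathcal E),\mathcal E)$. To make this precise I would first check that the exact category $\FPI(\class A)$ has enough projectives, with $\Proj(\FPI(\class A))=\FPI(\class A)\cap{}^{\bot}\FPI(\class A)$: for $M\in\FPI(\class A)$, completeness of $({}^{\bot}\FPI(\class A),\FPI(\class A))$ (Fact \ref{facts for our favorite cotorsion pair}) gives $0\to Y\to X\to M\to 0$ with $X\in{}^{\bot}\FPI(\class A)$ and $Y\in\FPI(\class A)$, and closure of $\FPI(\class A)$ under extensions (\ref{Facts for FPI}) puts $X$ in $\FPI(\class A)$, hence in $\Proj(\FPI(\class A))$; the reverse inclusion is clear since conflations in $\FPI(\class A)$ are short exact sequences in $\class A$. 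Lifting to complexes, I would identify $\Proj(\mathcal E)$ with the class of contractible complexes having components in $\FPI(\class A)\cap{}^{\bot}\FPI(\class A)$: one inclusion because any conflation onto such a complex is degreewise split (its cokernel has projective components) hence split by Lemma \ref{lemma}; the other because a projective object $Q$ of $\mathcal E$ is contractible (apply Lemma \ref{lemma} to $Q$ and $\Sigma^{-1}Q$, noting that a degreewise split extension of FP-injectives is a conflation of $\mathcal E$, so $\Hom_{\mathbf K(\class A)}(Q,Q)=0$) with components projective in $\FPI(\class A)$ (test against the disc complexes $D_{n}$, via $\Hom_{\mathcal E}(D_{n}(A),Y)\cong\Hom_{\FPI(\class A)}(A,Y_{n})$). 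By Proposition \ref{Emm +}, this class of contractible complexes is exactly $\class C\cap\class W$. Completeness of $(\Proj(\mathcal E),\mathcal E)$ is then immediate: one half is trivial (use the zero object), and for the other I would take a deflation $C\twoheadrightarrow X$ with $C\in\class C$ coming from completeness of $(\class C,\class W)$ and precompose it with the degreewise split deflation $\bigoplus_{n}D_{n}(C_{n})\twoheadrightarrow C$; the composite is a deflation onto $X$ whose source lies in $\Proj(\mathcal E)=\class C\cap\class W$, the point being that, because $C$ has FP-injective components, the kernels here are again FP-injective, so the construction stays inside $\mathcal E$.

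Next I would verify the compatibility needed to pass from two cotorsion pairs to a Hovey triple: $\Proj(\mathcal E)\subseteq\class C$, $\class W\subseteq\mathcal E$, and $\class C\cap\class W=\Proj(\mathcal E)=\Proj(\mathcal E)\cap\mathcal E$. This yields a thick class $\class V$ of objects of $\mathcal E$ with $(\class C,\class V,\mathcal E)$ a Hovey triple; since the ``fibrant--trivial'' cotorsion pair of this triple is forced to be $(\class C,\class V)$, comparison with Lemma \ref{lemma-pairs}(ii) gives $\class V=\class C^{\bot}=\class W$. By the Hovey--Gillespie correspondence \cite{Gil2011} this Hovey triple produces an exact model structure on $\Ch(\FPI(\class A))$ with cofibrant objects $\class C$, all objects fibrant, and trivial objects $\class W=\Ch_{\mathrm{pac}}(\FPI(\class A))$, as required. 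For the homotopy category: since everything is fibrant, the bifibrant objects coincide with the cofibrant ones, and on $\class C$ the model homotopy relation is chain homotopy (a good cylinder object for $C\in\class C$ is assembled from $C$ and the disc complexes $D_{n}(C_{n})$, whose components lie in $\FPI(\class A)\cap{}^{\bot}\FPI(\class A)$, so it stays inside $\mathcal E$ with the right (co)fibration and weak-equivalence properties); hence the homotopy category is $\mathbf K(\class C)=\mathbf K(\FPI(\class A)\cap{}^{\bot}\FPI(\class A))$, which by Theorem \ref{main thm} (applicable since $\class A$ is locally coherent) is equivalent to $\mathbf D(\FPI(\class A))$.

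I expect the main obstacle to be the thickness of $\class W=\Ch_{\mathrm{pac}}(\FPI(\class A))$ as a subcategory of $\mathcal E$, i.e.\ closure of pure acyclic FP-injective complexes under kernels of deflations in the degreewise exact structure; a direct diagram chase seems to run into an unavoidable circularity (one always needs to know a cycle is FP-injective in order to prove the relevant subobject inclusion is pure), so the argument above deliberately routes around it, extracting thickness formally from the two-cotorsion-pairs machinery once the concrete inputs — Lemma \ref{lemma-pairs}(ii), Proposition \ref{Emm +}, and the description of $\Proj(\mathcal E)$ — are in place.
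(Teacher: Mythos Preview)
Your proposal is correct and follows the same route the paper indicates: the paper does not actually give a proof of this theorem, only the sentence preceding it, which says that the complete hereditary cotorsion pair $(\class C,\class W)$ of Lemma~\ref{lemma-pairs}(ii) ``corresponds (via the aforementioned Hovey--Gillespie theory) to an exact model structure on $\Ch(\FPI(\class A))$ with Quillen homotopy category $\mathbf{D}(\FPI(\class A))$.'' Your write-up is a careful unpacking of exactly that sentence.

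The one point worth noting is that you go further than the paper by isolating the nontrivial verification the paper suppresses, namely thickness of $\class W=\Ch_{\mathrm{pac}}(\FPI(\class A))$ in $\Ch(\FPI(\class A))$ (closure under kernels of deflations is genuinely not obvious, as you observe). Your workaround---produce the second complete hereditary cotorsion pair $(\Proj(\mathcal E),\mathcal E)$, identify its left class with $\class C\cap\class W$ via Proposition~\ref{Emm +}, and then invoke Gillespie's two-cotorsion-pairs theorem to obtain a thick $\class V$ which is forced to equal $\class C^{\bot}=\class W$---is a clean way to close that gap. The paper presumably has the same endpoint in mind but does not spell out how thickness is obtained; your argument makes this explicit and is a legitimate strengthening of the exposition rather than a different strategy.
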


\begin{remark}
\label{Stov}
Let $\class A$ be a locally coherent Grothendieck category. \v{S}{\fontencoding{T1}\selectfont \v{t}}ov\'i\v{c}ek in \cite[Thm.~6.12]{Stoviceck-on-purity} shows the existence of a model structure with Quillen homotopy category $\mathbf{D}(\FPI(\class A))$ and also proves an equivalence $\mathbf{D}(\FPI(\class A))\cong\mathbf{K}(\Inj(\class A))$. In \ref{Model category} we identify the cofibrant objects of this model structure with the category  $\Ch(\FPI(\class A)\cap\leftidx{^{\bot}}\FPI(\class A))$. Thus, combining \ref{Model category} with \cite[Thm.~6.12]{Stoviceck-on-purity} we obtain equivalences
\[\mathbf{K}(\FPI(\class A)\cap\leftidx{^{\bot}}\FPI(\class A))\cong\mathbf{D}(\FPI(\class A))\cong\mathbf{K}(\Inj(\class A)).\]
\end{remark}

\vspace*{3mm}

\section*{Acknowledgement}
The author would like to thank his PhD supervisors, Sergio Estrada from the University of Murcia and Henrik Holm from the University of Copenhagen.

\bibliographystyle{amsplain}
\bibliography{/Users/Tzo/Desktop/Latexxx/FPinjective.bib}

\def\soft#1{\leavevmode\setbox0=\hbox{h}\dimen7=\ht0\advance \dimen7
  by-1ex\relax\if t#1\relax\rlap{\raise.6\dimen7
  \hbox{\kern.3ex\char'47}}#1\relax\else\if T#1\relax
  \rlap{\raise.5\dimen7\hbox{\kern1.3ex\char'47}}#1\relax \else\if
  d#1\relax\rlap{\raise.5\dimen7\hbox{\kern.9ex \char'47}}#1\relax\else\if
  D#1\relax\rlap{\raise.5\dimen7 \hbox{\kern1.4ex\char'47}}#1\relax\else\if
  l#1\relax \rlap{\raise.5\dimen7\hbox{\kern.4ex\char'47}}#1\relax \else\if
  L#1\relax\rlap{\raise.5\dimen7\hbox{\kern.7ex
  \char'47}}#1\relax\else\message{accent \string\soft \space #1 not
  defined!}#1\relax\fi\fi\fi\fi\fi\fi} \def\cprime{$'$}
  \providecommand{\arxiv}[2][AC]{\mbox{\href{http://arxiv.org/abs/#2}{\sf
  arXiv:#2 [math.#1]}}}
  \providecommand{\oldarxiv}[2][AC]{\mbox{\href{http://arxiv.org/abs/math/#2}{\sf
  arXiv:math/#2
  [math.#1]}}}\providecommand{\MR}[1]{\mbox{\href{http://www.ams.org/mathscinet-getitem?mr=#1}{#1}}}
  \renewcommand{\MR}[1]{\mbox{\href{http://www.ams.org/mathscinet-getitem?mr=#1}{#1}}}
\providecommand{\bysame}{\leavevmode\hbox to3em{\hrulefill}\thinspace}
\providecommand{\MR}{\relax\ifhmode\unskip\space\fi MR }
\providecommand{\MRhref}[2]{%
  \href{http://www.ams.org/mathscinet-getitem?mr=#1}{#2}
}
\providecommand{\href}[2]{#2}
\begin{thebibliography}{10}

\bibitem{Estrada-Bazzoni-Izurdiaga}
Silvana Bazzoni, Manuel Cort\'es~Izurdiaga, and Sergio Estrada, \emph{Periodic
  modules and acyclic complexes}, preprint, 2017,
  https://arxiv.org/abs/1704.06672.

\bibitem{locally-fp-Breitsprecher}
Siegfried Breitsprecher, \emph{Lokal endlich pr\"asentierbare
  {G}rothendieck-{K}ategorien}, Mitt. Math. Sem. Giessen Heft \textbf{85}
  (1970), 1--25. \MR{MR0262330}

\bibitem{Buhler-exact-cat}
Theo B\"uhler, \emph{Exact categories}, Expo. Math. \textbf{28} (2010), no.~1,
  1--69. \MR{MR2606234}

\bibitem{Cohn-purity}
P.M. Cohn, \emph{On the free product of associative rings}, Math. Z.
  \textbf{71} (1959), 380--398. \MR{MR0106918}

\bibitem{WCB94}
William Crawley-Boevey, \emph{Locally finitely presented additive categories},
  Comm. Algebra \textbf{22} (1994), no.~5, 1641--1674. \MR{MR1264733}

\bibitem{PCEJTr01}
Paul~C. Eklof and Jan Trlifaj, \emph{How to make {E}xt vanish}, Bull. London
  Math. Soc. \textbf{33} (2001), no.~1, 41--51. \MR{MR1798574}

\bibitem{Emm-pure-acyclic}
Ioannis Emmanouil, \emph{On pure acyclic complexes}, J. Algebra \textbf{465}
  (2016), 190--213. \MR{MR3537821}

\bibitem{rha}
Edgar~E. Enochs and Overtoun M.~G. Jenda, \emph{Relative homological algebra},
  de Gruyter Expositions in Mathematics, vol.~30, Walter de Gruyter \& Co.,
  Berlin, 2000. \MR{MR1753146}

\bibitem{Gillespie-flat-model-2004}
James Gillespie, \emph{The flat model structure on {${\rm Ch}(R)$}}, Trans.
  Amer. Math. Soc. \textbf{356} (2004), no.~8, 3369--3390. \MR{MR2052954}

\bibitem{Gillespie-flat-sheaves}
\bysame, \emph{The flat model structure on complexes of sheaves}, Trans. Amer.
  Math. Soc. \textbf{358} (2006), no.~7, 2855--2874. \MR{MR2216249}

\bibitem{Gil2011}
\bysame, \emph{Model structures on exact categories}, J. Pure Appl. Algebra
  \textbf{215} (2011), no.~12, 2892--2902. \MR{MR2811572}

\bibitem{GobelTrlifaj}
R{\"u}diger G{\"o}bel and Jan Trlifaj, \emph{Approximations and endomorphism
  algebras of modules}, de Gruyter Expositions in Mathematics, vol.~41, Walter
  de Gruyter GmbH \& Co. KG, Berlin, 2006. \MR{MR2251271}

\bibitem{LGrCUJ81}
Laurent Gruson and Christian~U. Jensen, \emph{Dimensions cohomologiques
  reli\'ees aux foncteurs {$\varprojlim\sp{(i)}$}}, Paul {D}ubreil and
  {M}arie-{P}aule {M}alliavin {A}lgebra {S}eminar, 33rd {Y}ear ({P}aris, 1980),
  Lecture Notes in Math., vol. 867, Springer, Berlin, 1981, pp.~234--294.
  \MR{MR633523}

\bibitem{HHlPJr07b}
Henrik Holm and Peter J{\o}rgensen, \emph{Compactly generated homotopy
  categories}, Homology, Homotopy Appl. \textbf{9} (2007), no.~1, 257--274.
  \MR{MR2280295}

\bibitem{hovey}
Mark Hovey, \emph{Cotorsion pairs, model category structures, and
  representation theory}, Math. Z. \textbf{241} (2002), no.~3, 553--592.
  \MR{MR1938704}

\bibitem{SInHKr06}
Srikanth Iyengar and Henning Krause, \emph{Acyclicity versus total acyclicity
  for complexes over {N}oetherian rings}, Doc. Math. \textbf{11} (2006),
  207--240. \MR{MR2262932}

\bibitem{Krause-approx-adj}
Henning Krause, \emph{Approximations and adjoints in homotopy categories},
  Math. Ann. \textbf{353} (2012), no.~3, 765--781. \MR{MR2923949}

\bibitem{DMfSSl11}
Daniel Murfet and Shokrollah Salarian, \emph{Totally acyclic complexes over
  {N}oetherian schemes}, Adv. Math. \textbf{226} (2011), no.~2, 1096--1133.
  \MR{MR2737778}

\bibitem{Neeman-derived-cat-of-exact-cat}
Amnon Neeman, \emph{The derived category of an exact category}, J. Algebra
  \textbf{135} (1990), no.~2, 388--394. \MR{MR1080854}

\bibitem{Neeman-Brown-96}
\bysame, \emph{The {G}rothendieck duality theorem via {B}ousfield's techniques
  and {B}rown representability}, J. Amer. Math. Soc. \textbf{9} (1996), no.~1,
  205--236. \MR{MR1308405}

\bibitem{ANm08}
\bysame, \emph{The homotopy category of flat modules, and {G}rothendieck
  duality}, Invent. Math. \textbf{174} (2008), no.~2, 255--308. \MR{MR2439608}

\bibitem{Salce-cotorsion}
Luigi Salce, \emph{Cotorsion theories for abelian groups},  (1979), 11--32.
  \MR{MR565595}

\bibitem{Saorin-Stoviceck-exact}
Manuel Saor\'in and Jan \v{S}{\fontencoding{T1}\selectfont \v{t}}ov\'i\v{c}ek,
  \emph{On exact categories and applications to triangulated adjoints and model
  structures}, Adv. Math. \textbf{228} (2011), no.~2, 968--1007. \MR{MR2822215}

\bibitem{Simson-pacific}
Daniel Simson, \emph{Pure-periodic modules and a structure of pure-projective
  resolutions}, Pacific J. Math. \textbf{207} (2002), no.~1, 235--256.
  \MR{MR1974474}

\bibitem{Simson-pure-acyclic}
\bysame, \emph{Flat complexes, pure periodicity and pure acyclic complexes}, J.
  Algebra \textbf{480} (2017), 298--308. \MR{MR3633309}

\bibitem{Stenstrom-FPI}
Bo~Stenstr\"om, \emph{Coherent rings and {$FP$}-injective modules}, J. London
  Math. Soc. (2) \textbf{2} (1970), 323--329. \MR{MR0258888}

\bibitem{Stoviceck-Hill-lemma}
Jan \v{S}{\fontencoding{T1}\selectfont \v{t}}ov\'i\v{c}ek,
  \emph{Deconstructibility and the {H}ill lemma in {G}rothendieck categories},
  Forum Math. \textbf{25} (2013), no.~1, 193--219. \MR{MR3010854}

\bibitem{Stoviceck-on-purity}
\bysame, \emph{On purity and applications to coderived and singularity
  categories}, preprint, 2014, https://arxiv.org/abs/1412.1615.

\end{thebibliography}

\end{document}